\documentclass[12pt, a4paper]{article}
\usepackage[utf8]{inputenc} 
\usepackage[T1]{fontenc}
\usepackage[english]{babel} 
\usepackage{amsmath, amssymb, amsthm, amsfonts} 
\usepackage{graphicx} 
\usepackage{geometry} 
\usepackage{authblk}
\usepackage{float}
\usepackage{caption}
\usepackage{subcaption}
\usepackage{cite}
\newtheorem{theorem}{Theorem}[section]
\newtheorem{lemma}[theorem]{Lemma}
\newtheorem{definition}[theorem]{Definition}
\newtheorem{remark}[theorem]{Remark}
\newtheorem{corollary}{Corollary}[section]

\title{Almost Periodicity in  Exponential Dichotomous Linear Dynamics with Piecewise Constant Argument}
\author[1]{Marat Akhmet\thanks{marat@metu.edu.tr}}
\author[1]{Furkan Duraner\thanks{furkan.duraner@metu.edu.tr}}

\affil[1]{Department of Mathematics, Middle East Technical University}

\date{}

\begin{document}
\allowdisplaybreaks

\maketitle

\begin{abstract} The paper examines an inhomogeneous system of differential equations with generalized piecewise constant arguments. The focus of the research is on the almost periodicity of both the perturbation and the solution. The system demonstrates exponential dichotomy, indicating complex asymptotic behavior among neighboring motions. A notable methodological aspect of the analysis is the use of Green's function to construct a unique almost periodic solution. Additionally, a comparison with previous results has been made. An illustrative example with numerical visualizations is provided to clarify this concept.
\end{abstract}

\noindent \textbf{Keywords:} Linear almost periodic  systems; Inhomogeneous equations; Exponential dichotomy; Piecewise constant argument of generalized type; Green's function.\\
\textbf{Mathematics Subject Classification:} Primary: 34K14; Secondary: 34D09, 34K06.
\section{Introduction and Preliminaries}
\indent A new class of differential equations, known as Equations with Piecewise Constant Argument of Generalized type (EPCAG), was introduced and systematically developed in a series of works \cite{1,2,3,4,5,6,7,8,9,10,11,12}. These systems  provide a flexible framework for modeling hybrid phenomena in which continuous dynamics interact with discrete mechanisms based on piecewise  memory. EPCAG models have found applications in neuroscience, control theory, and population dynamics.\\
Almost periodicity is considered the most developed concept for recurrent dynamics. Despite its rather theoretical merits, it attracts researchers with the completeness of functional properties and challenges to prove the conditions for outputs of models under investigation.  The existence of almost periodic solutions remains one of the most compelling topics in the theory of differential equations \cite{21,22,23}. The systematic investigation of almost periodicity for systems with generalized piecewise constant argument  was pioneered in \cite{3}.\\
Exceptional difficulties, one meets if considers hybrid systems, when the property has to be observed in continuous and discrete time, and consistent for both of them \cite{2}.  Another intriguing task in analyzing hybrid systems is the description of exponential dichotomies. In our papers \cite{2,7,1}, we have introduced this concept and provided constructive circumstances that are effective for practical applications.\\
\indent Within this established framework, the present paper investigates the existence and construction of almost periodic solutions to nonhomogeneous linear EPCAG systems whose homogeneous part admits exponential dichotomy. Our analysis refines and further develops the Green-function approach introduced in earlier works \cite{6,7,10} together with improved Cauchy matrix representations and integral formulas adapted to the EPCAG structure.\\
\indent We show that if the nonhomogeneous term is almost periodic, then, under an exponential dichotomous homogeneous part, the entire EPCAG system possesses a unique almost periodic solution. These results extend the existing existence-uniqueness theory to a broader class of inputs and hybrid configurations. Also, an example and a numerical simulation are provided to illustrate exponential dichotomy and almost periodic behavior within a unified theoretical framework.\\
\indent \indent Following the foundational developments in \cite{1,2,3,4,5,6,7,8,9,10,11,12}, it is well established that EPCAG provides a robust framework for modeling hybrid dynamics. Building upon these established principles, the present work focuses on a qualitative analysis of the following nonhomogeneous linear system,
\begin{equation} \label{eq:second}
	z'(t) = A_0(t)z(t) + A_1(t)z(\gamma(t)) + f(t),
\end{equation}
where $t$ is a real number, $z(t)$ is an $n$-dimensional real-valued vector function, and the coefficients $A_0(t), A_1(t)$ are square $n$-dimensional real-valued matrix-functions, continuous and bounded. The forcing term $f(t)$ is a continuous and bounded $n$-dimensional real-valued vector-function. The entries of the system and the argument function $\gamma(t)$ will be completely defined in the preliminaries provided in Section \ref{s2}.\\
\indent To investigate the almost periodicity for the system \eqref{eq:second}, we will apply constructive conditions on the associated homogeneous system,
\begin{equation} \label{eq:first}
	z'(t) = A_0(t)z(t) + A_1(t)z(\gamma(t)).
\end{equation}
\indent Throughout this paper, a fundamental assumption is that system \eqref{eq:first} admits \textit{exponential dichotomy}. Under the condition, we aim to establish the existence and uniqueness of a Bohr almost periodic solution for the main system \eqref{eq:second}.\\
\indent A primary focus of this study is to demonstrate that almost periodicity constitutes a significant asymptotic structure for the solutions of EPCAG systems. Our methodology relies on the  equivalent integral equation  adapted to the EPCAG framework \cite{12,20}, combined with new estimates for the specifically defined Green's function, which were initially determined in \cite{2,11}. By using these estimations, we evaluate the diagonal almost periodicity of the components which provides the necessary analytical foundation to prove the existence of the almost periodic solution of system \eqref{eq:second}.\\
\indent It should be mentioned that in the recent work of Chiu \cite{24} related to the present research  the critical analytical flaws found. In the aforementioned study, the author attempts to establish the existence of almost periodic solutions by reducing the continuous-time EPCAG system to a discerete-time difference equation. This reduction-based approach relies exclusively on the exponential dichotomy of the discerete system, following the classical framework of Wiener \cite{16}. However, the investigation of consistency of the dichotomies of  the discrete reduction and the original continuous evolution is regrettably absent in \cite{24}. It is well established in the theory of hybrid systems that the exponential dichotomy of the discrete system does not automatically imply the existence of uniform exponential bounds for the continuous-time model. Without a rigorous proof of the compatibility conditions that link the exponential estimates of the discrete system to the inter-interval behavior, any claim regarding the continuous-time almost periodicity remains mathematically incomplete.\\
\indent Furthermore, the methodology in \cite{24} fails to provide the necessary conditions under which the exponential dichotomy of the reduced discrete system can be lifted to the continuous system. In EPCAG framework, the argument $\gamma(t)$ introduces a unique coupling between the discrete state and the continuous derivative. Consequently, establishing an almost periodic solution for the reduced discrete system is merely a preliminary step that does not, by itself, verify that the solution of the differential equation satisfies the required Bohr-periodicity. The present paper develops the almost periodicity theory directly for the continuous EPCAG system. By grounding our analysis in the exponential dichotomy of the continuous fundamental matrix, we eliminate the ambiguities inherent in discrete reductions. Our approach ensures that the dichotomic separation is maintained throughout the entire real line, not just at isolated discrete points. \\
\indent Moreover, \cite{24} contains a significant conceptual contradiction regarding global stability. The author asserts that a nonhomogeneous equation with an exponential dichotomous linear part admits a globally exponentially stable almost periodic solution. This assertion is theoretically untenable; by definition, an exponential dichotomy implies the existence of a non-trivial unstable manifold. The presence of such a divergent subspace inherently precludes the possibility of global exponential stability for any bounded solution. Such fundamental discrepancies in the underlying stability assumptions of \cite{24} potentially undermine the analytical validity of the subsequent results.\\  
\indent To rectify the inconsistencies such as Theorem 4, Theorem 9 in \cite{24}, this study provides a mathematically sound framework where the exponential dichotomy is verified directly for the continuous-time system. This ensures that the resulting almost periodic solution is established under consistent and minimal assumptions, providing a reliable bridge between the theory of hybrid systems and asymptotic analysis. To illustrate the superiority of this direct approach, we present an example where the exponential dichotomic separation in the continuous system is numerically evaluated, confirming that our solutions are robust and analytically valid.\\
\indent To ensure a rigorous analysis, we provide the following fundamental definitions that characterize the solutions and the almost periodic structure of the system.\\
\indent Let $\mathbb{Z}$, $\mathbb{N}$, and $\mathbb{R}$ be the sets of integers, natural numbers, and real numbers, respectively. We denote by $\mathbb{R}^n$ the $n$-dimensional real vector space and by $\mathbb{R}^{n \times n}$ the space of all $n \times n$ real matrices. For any vector $z \in \mathbb{R}^n$, its Euclidean norm is represented by $\|z\|$, while for a matrix $A \in \mathbb{R}^{n \times n}$, we denote its induced matrix norm similarly by $\|A\|$.
\begin{definition}\cite{2} \label{def:epcag}Let $\theta = \{\theta_i\}$ and $\zeta = \{\zeta_i\}$, $i \in \mathbb{Z}$, be two fixed real-valued sequences satisfying the following conditions:
\begin{enumerate}
\item[(i)] $\theta_i < \theta_{i+1}$ for all $i \in \mathbb{Z}$, and $|\theta_i| \to \infty$ as $|i| \to \infty$;
\item[(ii)] $\theta_i \leq \zeta_i < \theta_{i+1}$ for all $i \in \mathbb{Z}$.
\end{enumerate}
The argument function $\gamma(t)$, which characterizes the piecewise constant nature of the systems \eqref{eq:second} and \eqref{eq:first}, is defined by
\begin{equation}\gamma(t) = \zeta_i, \quad \text{if } t \in [\theta_i, \theta_{i+1}), \quad i \in \mathbb{Z}.
\end{equation}
\end{definition}
\begin{definition}\cite{2} A continuous function $ z(t) $ is a solution of \eqref{eq:second} (or \eqref{eq:first}) on $ \mathbb{R} $ if:
\begin{enumerate} \label{solndef}
\item the derivative $ z'(t) $ exists at each point $ t \in \mathbb{R} $, with the possible exception of the points $ \theta_i $, $ i \in \mathbb{Z} $, where one-sided derivatives exist;
\item the equation is satisfied by $ z(t) $ on each interval $ (\theta_i, \theta_{i+1}) $, $ i \in \mathbb{Z} $, and it holds for the right derivative of $ z(t) $ at the points $ \theta_i $, $ i \in \mathbb{Z} $.
\end{enumerate}\label{def:solution}
\end{definition}
\indent Denote by $X(t,s)$, $X(s,s)=I$, where $I$ is the $n\times n$ identity matrix, $t,s \in \mathbb{R}$, the fundamental matrix of solutions of the system,
\begin{equation}
    x'(t)=A_0(t)x(t),
\end{equation}
which is associated with systems  \eqref{eq:second} and \eqref{eq:first}.\\
\indent We introduce the following matrix function which is essential for characterizing the transition of the state $z(t)$ between the switching moments  $\theta_i$ and $\theta_{i+1}$ for each index $i$.
\begin{equation}
M_i(t)=X(t,\zeta_i)+\int\limits_{\zeta_i}^tX(t,s)A_1(s)ds.
\end{equation}
\begin{definition} \cite{1} Let a set $C_0(\mathbb{R})$ be a set of all bounded and uniformly continuos functions on $\mathbb{R}$. For a function $f\in C_0(\mathbb{R})$ and $\tau\in \mathbb{R}$, \textit{a translate by $\tau$} is the function $f(t+\tau)$.
\end{definition}
\begin{definition} \cite{1} A number $\tau \in \mathbb{R}$ is called an \textit{$\epsilon$-translation number of $f$} $\in C_0(\mathbb{R})$ if the translation of $f$ by $\tau$ satisfies 
\begin{equation*}
    \|f(t+\tau) - f(t)\| < \epsilon \quad\text{for every } t\in \mathbb{R}.
\end{equation*}
\end{definition}
\begin{definition}\cite{23} \label{def:relatively_dense}
A set $S \subset \mathbb{R}$ is said to be \textit{relatively dense} if there exists a positive number $L > 0$ such that every interval of length $L$ contains at least one element of $S$. That is, for every $a \in \mathbb{R}$,
\begin{equation*}
[a, a+L] \cap S \neq \emptyset.
\end{equation*}
\end{definition}
\begin{definition} \cite{23} A function $f \in C_0(\mathbb{R})$ is said to be \textit{almost periodic} (in the sense of Bohr) if for every $\epsilon > 0$, there exists a relatively dense set of $\epsilon$-translation numbers $\tau$ of $f$.
\end{definition}
\begin{definition}\cite{23}
    A matrix function $A(t) \in \mathbb{R}^{n \times n}$ is said to be \textit{almost periodic} (in the sense of Bohr) if for every $\varepsilon > 0$, there exists a relatively dense set of $\varepsilon$-translation numbers $\tau \in \mathbb{R}$ such that the following inequality holds:
\begin{equation*}
|| A(t + \tau) - A(t) || < \varepsilon, \quad \text{for all } t \in \mathbb{R}.
\end{equation*}
\end{definition}
\begin{definition}\cite{23} \label{def:ap_sequence}
A sequence of real or complex $k$-dimensional vectors $a_i$,$i \in
\mathbb{Z}$, is said to be almost periodic if for arbitrary $\epsilon>0$, there exists a relatively dense set of integers $q$ which satisfy the inequality 
\begin{align*}
||a_{i+q} - a_i||  < \epsilon \quad \text{for all}\quad i \in \mathbb{Z}
\end{align*}
The number $q$ is said to be an $\epsilon$-almost period of the sequence.
\end{definition}
The sequences $\theta$ and $\zeta$ in Definition \ref{def:epcag} play a critical role in the almost periodic behavior of the system. Let us denote   $\theta_i^j = \theta_{i+j} - \theta_i$ and $\zeta_i^j = \zeta_{i+j} - \zeta_i$ for all $i, j \in \mathbb{Z}$
.
\begin{definition} The family of sequences $\{\theta_i^j\}$, $j\in \mathbb{Z}$, is called \textit{equipotentially almost periodic} \cite{12, 15} if for an arbitrary $\epsilon > 0$ there exists a relatively dense set of $\epsilon$-translation numbers, common for all sequences $\{\theta_i^j\}$, $j\in \mathbb{Z}$.\label{def:unif_ap}
\end{definition}
\section{Solvability of the Model and the Fundamental Matrix} \label{s2}
\indent \indent The following assumptions will be needed throughout this paper:\vspace{0,2cm}\\
\indent To ensure that the entire dynamic structure including the continuous coefficients, external force and the discrete switching sequences remains consistent across the entire real line, we assume the following almost periodicity properties\vspace{0,1cm}\\
\noindent \textbf{(C1)} The matrices $A_0(t), A_1(t)\in \mathbb{R}^{n\times n}$ and the vector function $f(t) \in \mathbb{R}^n$ are almost periodic; \vspace{0,1cm}\\
\noindent \textbf{(C2)} The sequences $\{\zeta_i^j\}$ and $\{\theta_i^j\}$ are equipotentially almost periodic for all $j \in \mathbb{Z}$; \vspace{0,1cm}\\
\indent These conditions are required to synchronize the behavior of the nonhomogeneous term with the timing of the switching moments. Specifically, (C2) guarantees that the discrete jumps do not disrupt the almost periodic nature of the solution, allowing the system to maintain its character throughout the entire time domain. Moreover (C2) implies that $|\theta_i|, |\zeta_i| \to \infty$ as $|i| \to \infty$.\vspace{0,1cm}\\
\indent To ensure that the switching moments are properly spaced across the entire real line and to maintain a consistent temporal structure, we assume the following bounds\vspace{0,1cm}\\
 \noindent \textbf{(C3)} there exist positive numbers $\underline{\theta}$ and $\bar{\theta}$ such that $\underline{\theta} \leq\theta_{i+1}-\theta_i \leq \bar{\theta}, \forall i \in \mathbb{Z}$;\vspace{0,1cm}\\
 \indent Condition (C3) regulates the spacing of the switching sequence by maintaining the length of each interval within the bounds $[\underline{\theta}, \bar{\theta}]$. This prevents the intervals from becoming excessively large while ensuring that the switching moments do not cluster together, thereby providing a consistent discrete structure for the solution across the entire real line.\vspace{0,1cm}\\
 \indent To guarantee that the solution is uniquely determined and remains invertible within each switching interval across the entire real line, we assume the following non-singularity condition\vspace{0,1cm}\\
 \noindent \textbf{(C4)} For every fixed $i\in \mathbb{Z}$, $\text{det}[M_i(t)]\neq 0$, $\forall t\in [\theta_i,\theta_{i+1}]$;\vspace{0,1cm}\\
  \indent This condition is necessary to ensure the local existence and uniqueness of the solution.\vspace{0,1cm}\\
\indent In this study, we assume that the solutions of the given equations are continuous functions on $\mathbb{R}$ since they satisfy the condition $z(\theta_i) = z(\theta_i-)$ for each index $i$, where $z(\theta_i-) = \lim\limits_{t \to \theta_i^-} z(t)$ denotes the left-hand limit of $z(t)$ at $t = \theta_i$. While the solutions themselves are continuous, the piecewise constant argument $\gamma(t)$ is discontinuous. This causes the derivatives of solutions and the right-hand sides of \eqref{eq:second} and \eqref{eq:first} to generally exhibit discontinuities at the switching moments $\theta_i$ for each index $i$. \\
\indent Hence, we consider the solutions of these equations as functions that remain continuous on $\mathbb{R}$ and continuously differentiable within each interval $(\theta_i,\theta_{i+1})$ for each index $i$. It should be noted that the derivatives $z'(t)$ may possess discontinuities at the points $\theta_i$, where only one-sided derivatives are guaranteed to exist.
\begin{theorem}\cite{2}
Assume that $A_0(t), A_1(t) \in \mathbb{R}^{n \times n}$ are continuous. Then for every $(t_0,z_0)\in \mathbb{R}\times\mathbb{R}^n$, there exists a unique solution $z(t)$ of \eqref{eq:first} in the sense of Definition \ref{def:solution} such that $z(t_0)=z_0$ if and only if the condition (C4) is valid. \label{thm1}
\end{theorem}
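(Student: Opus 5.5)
On each interval $[\theta_i,\theta_{i+1}]$ the value $z(\gamma(t))=z(\zeta_i)$ is a constant vector, so \eqref{eq:first} becomes a linear ordinary differential equation with a constant inhomogeneity. The plan is to solve it explicitly on one interval and then glue the intervals together. For $t\in[\theta_i,\theta_{i+1}]$, variation of constants started at $\zeta_i$ gives
\begin{equation*}
z(t)=X(t,\zeta_i)z(\zeta_i)+\int_{\zeta_i}^{t}X(t,s)A_1(s)\,ds\; z(\zeta_i)=M_i(t)z(\zeta_i).
\end{equation*}
\textbf{Step 1 (local representation).} Conversely, for any vector $c$ the function $M_i(t)c$ is a solution of \eqref{eq:first} on $[\theta_i,\theta_{i+1})$ in the sense of Definition \ref{def:solution}, provided it satisfies the consistency condition $M_i(\zeta_i)c=c$. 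This condition holds because $M_i(\zeta_i)=I$. Hence the solutions on the $i$-th interval are exactly the functions $M_i(t)c$, $c\in\mathbb{R}^n$, with $c=z(\zeta_i)$.

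\textbf{Step 2 (sufficiency of (C4)).} Let $t_0\in[\theta_j,\theta_{j+1})$. The condition $z(t_0)=z_0$ reads $M_j(t_0)c=z_0$. Since $M_j(t_0)$ is invertible, it has the unique solution $c=M_j(t_0)^{-1}z_0$, which determines $z$ on $[\theta_j,\theta_{j+1}]$.

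Forward continuation: continuity at $\theta_{j+1}$ requires $M_{j+1}(\theta_{j+1})c'=z(\theta_{j+1})$, and (C4) gives a unique $c'$. By induction, $z$ is uniquely determined on $[\theta_k,\theta_{k+1}]$ for every $k>j$.

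Backward continuation: on $[\theta_{j-1},\theta_j]$ we need $M_{j-1}(\theta_j)c''=z(\theta_j)$, where $z(\theta_j)$ is the left limit. Here (C4) is used at the right endpoint $t=\theta_j$ of the closed interval. Inducting downward covers every $k<j$.

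Since $|\theta_i|\to\infty$, these pieces cover $\mathbb{R}$. The resulting function is continuous, and on each interval it is unique by Step 1. This proves existence and uniqueness.

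\textbf{Step 3 (necessity).} Suppose $\det M_i(t^*)=0$ for some $i$ and some $t^*\in[\theta_i,\theta_{i+1}]$. Pick $v\neq 0$ with $M_i(t^*)v=0$.

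If $t^*<\theta_{i+1}$, take $t_0=t^*$ and $z_0=0$. The zero solution and the function equal to $M_i(t)v$ on $[\theta_i,\theta_{i+1}]$ both satisfy the initial condition on this interval. They are different, because their values at $\zeta_i$ are $0$ and $v$ respectively. To be genuine solutions they must be extended to all of $\mathbb{R}$, or the theorem must be read as requiring uniqueness near $t_0$. The cleanest route is to use the phrase "for every $(t_0,z_0)$": either the extension exists and uniqueness fails, or the extension fails and existence fails. In both cases the equivalence is violated.

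If $t^*=\theta_{i+1}$, take $t_0=\theta_{i+1}$ and $z_0=0$, treated as the left-endpoint data for the backward step. The same two functions again give distinct continuous solutions on $[\theta_i,\theta_{i+1}]$.

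An alternative for necessity is to choose $z_0\notin\operatorname{range}M_i(t^*)$. Then no $c$ satisfies $M_i(t^*)c=z_0$, so there is no solution on that interval with $z(t^*)=z_0$, and existence fails.

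\textbf{Main obstacle.} I expect the main difficulty to be this necessity step: one has to make rigorous that a local failure of invertibility on one interval breaks either existence or uniqueness for solutions defined on all of $\mathbb{R}$. The non-existence version using $z_0\notin\operatorname{range}M_i(t^*)$ avoids the extension problem, so I will use it as the primary argument.
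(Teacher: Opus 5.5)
Your proof is correct, and its sufficiency half follows the same route as the paper's sketch (taken from \cite{2}): the interval representation $z(t)=M_i(t)z(\zeta_i)$, solving $M_j(t_0)c=z_0$, and gluing by continuity using (C4) at left endpoints going forward and at right endpoints going backward, which is exactly what produces the products in the fundamental-matrix formulas. The paper gives no necessity argument, and your range argument ($z_0\notin\operatorname{range}M_i(t^*)$, with continuity forcing $z(\theta_{i+1})=M_i(\theta_{i+1})z(\zeta_i)$ in the endpoint case) supplies it; drop the ``either/or'' uniqueness variant, because for the data $(t^*,0)$ the zero solution always exists, so existence cannot fail there and that dichotomy is wrong as stated.
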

This theorem establishes a correspondence between the points $(t_0,z_0)\in \mathbb{R}\times\mathbb{R}^n$ and the solutions of \eqref{eq:first}, ensuring that no solution of the equation exists outside of this correspondence. Using this assertion, we can say that the definition of the initial value problem for differential equations with piecewise constant arguments is similar to that for ordinary differential equations. In particular, the dimension of the space of all solutions is $n$. Hence, the investigation of
problems considered in this section does not need to be supported by the results of the theory of functional differential equations \cite{3, 13}, despite the fact EPCAG are equations with deviated arguments. \\
\indent The system \eqref{eq:first} is a differential equation with a deviated argument. That is why, it is reasonable
to assume that the interval $[\theta_i,\theta_{i+1})$, for an index $i$, containing $t_0$ must consist of more than one point as a functional differential equation. The following arguments show that, in our case, we need only one initial moment. In fact, assume that $(t_0,z_0)$ is fixed, and there exists an index $i$ such that $\theta_i \leq t_0 < \theta_{i+1}$. The solution satisfies, on the interval $[\theta_i,\theta_{i+1})$ $i\in\mathbb{Z}$, the functional differential equation
\begin{equation*}
    z'(t)=A_0(t)z(t)+A_1(t)z(\zeta_i).
\end{equation*}
\indent Assume that $t_0 \neq \zeta_i$. Then, formally, we need the pair of initial points $(t_0,z_0)$ and $(\zeta_i,z(\zeta_i))$ to proceed with the solution. Indeed, since $z_0=M_i(t_0)z(\zeta_i)$ as explained in \cite{2} and the matrix $M_i(t_0)$ is nonsingular, we can say that the initial condition $z(t_0) = z_0$ is sufficient to define the solution.\\
\indent Hence, for a fixed $t_0\in \mathbb{R}$ there exists a fundamental matrix of solutions of \eqref{eq:first},\\ $Z(t)=Z(t,t_0)$, $Z(t_0,t_0)=I$ such that 
\begin{equation*}
    \frac{dZ}{dt}=A_0(t)Z(t)+A_1(t)Z(\gamma(t))
\end{equation*}
Without loss of generality assume that $\theta_i < t_0 < \zeta_i$ for a fixed index $i$ and define, as it is done in \cite{1}, the following matrix for increasing t
\begin{equation}\label{eqn7}
    Z(t)=M_l(t)\left[ \prod\limits_{k=l}^{i+1}M_k^{-1}(\theta_k)M_{k-1}(\theta_k) \right]M_i^{-1}(t_0), 
\end{equation}
if $t\in[\theta_l,\theta_{l+1})$, for arbitrary $l>i$.\\
\indent Similarly, if $\theta_j \leq t \leq \theta_{j+1}<...<\theta_i \leq t_0 \leq \theta_{i+1}$
\begin{equation}
    Z(t)=M_j(t)\left[ \prod\limits_{k=j}^{i-1}M_k^{-1}(\theta_{k+1})M_{k+1}(\theta_{k+1}) \right]M_i^{-1}(t_0).
\end{equation}
\indent One can easily see that 
\begin{equation}
Z(t,s)=Z(t)Z^{-1}(s), \quad t,s \in \mathbb{R}
\end{equation}
   and a solution $z(t),z(t_0)=z_0, (t_0.z_0) \in \mathbb{R}\times\mathbb{R}^n$ of \eqref{eq:first} is equal to 
   \begin{equation}\label{eqn10}
       z(t)=Z(t,t_0)z_0 \quad t\in \mathbb{R.} 
   \end{equation}
The last formulas, (6)-(9), have been obtained in \cite{2}.\\
\indent Under the conditions (C1),(C3) and (C4), it can be shown that the transition matrices $Z(t,s)$ and $X(t,s)$ are uniformly bounded on each interval $[\theta_i,\theta_{i+1})$ for the indices $i$ and $t,s \in \mathbb{R}$.\\
\indent It is important to note that, since the matrices $A_0(t), A_1(t) \in \mathbb{R}^{n \times n}$ are almost periodic according to (C1), they are necessarily uniformly bounded.\\
\indent Consequently, there exist positive constants $m,M,\bar{M}$ such that $m\leq ||Z(t,s)|| \leq M$, and $||X(t,s)||\leq\bar{M}$, if $t,s \in [\theta_i,\theta_{i+1})$ for the indices $i$; these bounds will play a crucial role for the lemmas in the following sections.
\section{The Green Function for the Exponentially Dichotomous System}
\indent \indent In the various theories of differential and discrete equations, exponential dichotomy serves as a fundamental generalization of hyperbolicity to non-autonomous systems. This property facilitates the partitioning of the state space into stable and unstable subspaces, thereby establishing a fundamental framework for investigating the boundedness and stability of solutions. \\
\indent To characterize the stability properties of the system \eqref{eq:first}, denote by $Z(t)$, $t \in \mathbb{R}$ the Cauchy matrix satisfying $Z(t_0)=I$ for some fixed $t_0 \in \mathbb{R}$. Let $P$ be a projection matrix associated with system \eqref{eq:first}, i.e., $P^2=P$, $P\in\mathbb{R}^{n \times n}$.\\
\indent Then define
\begin{align}
    Z_-(t,s) := Z(t)PZ^{-1}(s), \qquad
    Z_+(t,s) := Z(t)(I-P)Z^{-1}(s). \label{EXPDICH}
\end{align} 
\indent The system \eqref{eq:first} is said to admit an exponential dichotomy on $\mathbb{R}$ if there exist positive constants $\sigma_1, \sigma_2, K_1, K_2$ such that the following estimates hold:
\begin{align}
    \|Z_-(t,s)\| &\leq K_1e^{-\sigma_1(t-s)}, \qquad t\geq s, \label{eq:Zminus} \\
    \|Z_+(t,s)\| &\leq K_2e^{\sigma_2(t-s)}, \qquad t\leq s. \label{eq:Zplus}
\end{align}
\indent To construct the Green's function for the differential equation with generalized piecewise constant argument, we must carefully manage the interplay between continuous flows and discrete transitions.\\
\indent It is important to note that for any arbitrary $t \in \mathbb{R}$, there exists a unique integer $j$ such that $t$ lies within the interval $[\theta_j, \theta_{j+1})$.\\
\indent In the classical theory of exponential dichotomies, the Green's function is fundamentally divided into two distinct components based on the order of $t$ and $s$: the stable evolution, which governs the trajectories when $s < t$, and the unstable evolution, which governs the trajectories when $s \geq t$.\\
\indent As explicitly defined in \cite{1}, these fundamental components are formulated as:
\begin{align}
    \Psi_-(t, s) &= 
    \begin{cases}
        Z_-(t, \theta_{k+1}) X(\theta_{k+1}, s), & s \in [\zeta_k, \zeta_{k+1}], \\
        X(t) P X^{-1}(s), & s \in \widehat{[\zeta_j, t]},
    \end{cases}\quad s < t, \label{psiminus}\\
    \Psi_+(t, s) &= 
    \begin{cases}
        Z_+(t, \theta_{k+1}) X(\theta_{k+1}, s), & s \in [\zeta_k, \zeta_{k+1}], \\
        X(t)(I - P) X^{-1}(s), & s \in \widehat{[\zeta_j, t]},
    \end{cases}\quad s \geq t\label{psiplus}
\end{align} 
where $t \in [\theta_j,\theta_{j+1})$, and the notation $\widehat{[a,b]}$ denotes the interval $[a,b]$ if $a \leq b$, and $[b,a]$ if $a > b$.\\ 
\indent Using these notations, the Green's function is defined in \cite{1} as:
\begin{align}
    G(t,s) = 
    \begin{cases}
        \Psi_-(t,s), & s < t, \\
        \Psi_+(t,s), & s \geq t.         
    \end{cases}
\end{align}
\indent To provide a more compact formulation, we redefine the Green's function $G(t, s)$ in this paper directly based on the location of $s \in [\zeta_k, \zeta_{k+1}]$ relative to the interval index $j$ associated with $t$:
\begin{align}\label{G(t,s)}
    G(t,s) = 
    \begin{cases}
        Z_-(t, \theta_{k+1}) X(\theta_{k+1}, s), & s \in [\zeta_k, \zeta_{k+1}], \quad k+1 \leq j \\
        Z_+(t, \theta_{k+1}) X(\theta_{k+1}, s), & s \in [\zeta_k, \zeta_{k+1}], \quad k \geq j \\
        X(t,s), \quad s \in \widehat{[\zeta_j,t]}
    \end{cases}
\end{align}
\indent Note that the index condition $k+1 \leq j$ restricts $s$ to the intervals  $s < t$, while $k \geq j$ corresponds to the case $s \geq t$.\\
\indent Under the exponential dichotomy assumption of the equation \eqref{eq:first}, we consider the following integral representation to investigate the unique bounded solution of the non-homogeneous equation \eqref{eq:second}:
\begin{align}
    z(t) = \int\limits_{-\infty}^{\infty} G(t,s)f(s)\,ds. \label{eqnG}
\end{align}
\indent To streamline the analysis, we derive an exponential estimate for the Green’s function \eqref{G(t,s)} by accounting for the transition through jump moments. Specifically, this estimate incorporates the discrete switching dynamics encountered as the solution evolves from $s$ toward the current continuity interval of $t$. By letting $\sigma=\min \{\sigma_1,\sigma_2\}$ and $K=\max\{K_1 \bar{M} e^{\sigma_1 \bar{\theta}}, K_2 \bar{M} e^{\sigma_2 \bar{\theta}},\bar Me^{\sigma\bar{\theta}}\}$, we can effectively bridge the gap between the discrete switching moments and the continuous evolution of the system. This relationship is formally established in the following lemma.
\begin{lemma} \label{lemma3.2}
    If system \eqref{eq:first} admits an exponential dichotomy on $\mathbb{R}$, then the  Green's function \eqref{G(t,s)}  satisfies the following exponential estimate:
     \begin{align}
    ||G(t,s)|| \leq Ke^{-\sigma|t-s|}, \quad\forall t,s \in \mathbb{R}\label{||G(t,s)||}
\end{align}
\end{lemma}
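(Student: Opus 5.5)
The plan is a case analysis following the three branches of the definition \eqref{G(t,s)}. Fix $t\in[\theta_j,\theta_{j+1})$ and $s$ with $s\in[\zeta_k,\zeta_{k+1}]$. In each branch, $\|G(t,s)\|$ factors into a dichotomy factor $\|Z_\mp(t,\theta_{k+1})\|$ and a local factor $\|X(\theta_{k+1},s)\|$. The dichotomy factor is controlled by \eqref{eq:Zminus} or \eqref{eq:Zplus}. The local factor is bounded by $\bar M$, because $s$ and $\theta_{k+1}$ are close: by (ii) of Definition \ref{def:epcag} and (C3), $\zeta_k\ge\theta_k$ and $\zeta_{k+1}<\theta_{k+2}$, so $|\theta_{k+1}-s|\le\bar\theta$ up to one extra interval. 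The constant $K$ is built exactly to absorb the cost of replacing $\theta_{k+1}$ by $s$ in the exponent.

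\textbf{Case $k+1\le j$} (stable part, $s<t$). First, $\theta_{k+1}\le\theta_j\le t$, so \eqref{eq:Zminus} applies and gives
\[
\|G(t,s)\|\le K_1e^{-\sigma_1(t-\theta_{k+1})}\bar M .
\]
Next, write $t-\theta_{k+1}=(t-s)-(\theta_{k+1}-s)$. Since $\theta_{k+1}-s\le\bar\theta$, this gives
\[
\|G(t,s)\|\le K_1\bar M e^{\sigma_1\bar\theta}e^{-\sigma_1(t-s)}\le Ke^{-\sigma|t-s|}.
\]

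\textbf{Case $k\ge j$} (unstable part, $s\ge t$). Here $\theta_{k+1}\ge\theta_{j+1}>t$, so \eqref{eq:Zplus} applies and gives
\[
\|G(t,s)\|\le K_2\bar M e^{\sigma_2(t-\theta_{k+1})}=K_2\bar M e^{-\sigma_2(s-t)}e^{-\sigma_2(\theta_{k+1}-s)}.
\]
If $\theta_{k+1}\ge s$ the last factor is at most $1$. Otherwise $s-\theta_{k+1}\le\bar\theta$ and the factor is at most $e^{\sigma_2\bar\theta}$. Either way the bound is $Ke^{-\sigma(s-t)}$.

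\textbf{Middle branch}, $s\in\widehat{[\zeta_j,t]}$. Both $s$ and $t$ lie in $[\theta_j,\theta_{j+1})$, so $|t-s|\le\bar\theta$. Hence
\[
\|X(t,s)\|\le\bar M=\bar Me^{\sigma\bar\theta}e^{-\sigma\bar\theta}\le\bar Me^{\sigma\bar\theta}e^{-\sigma|t-s|}\le Ke^{-\sigma|t-s|}.
\]
Finally, since $\sigma\le\sigma_1,\sigma_2$, each estimate above is weakened to the common rate $\sigma$, which yields \eqref{||G(t,s)||}.

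\textbf{Main obstacle.} The delicate step is the bookkeeping of indices in the two dichotomy branches. One must check that $\theta_{k+1}\le t$ really holds when $k+1\le j$, and $\theta_{k+1}\ge t$ when $k\ge j$, so that the right estimate (\eqref{eq:Zminus} or \eqref{eq:Zplus}) applies. One must also check that $|s-\theta_{k+1}|$ is bounded by $\bar\theta$. Because the $\zeta$-intervals $[\zeta_k,\zeta_{k+1}]$ straddle $\theta_{k+1}$, the offset can be of either sign, and for $s$ near $\zeta_{k+1}$ it may even reach a full interval length. In that case I would use (C3) together with $\zeta_{k+1}<\theta_{k+2}$ to get $|s-\theta_{k+1}|\le\bar\theta$.

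The second point is applying the bound $\|X(\theta_{k+1},s)\|\le\bar M$ across a switching point. The paper states $\bar M$ only for $s,t$ in a common interval $[\theta_i,\theta_{i+1})$. I would instead justify it from the boundedness of $A_0$ and the uniform bound $|\theta_{k+1}-s|\le\bar\theta$, via Grönwall: $\|X(t,s)\|\le e^{\sup\|A_0\|\,|t-s|}$. This shows $\bar M$ can be taken uniform, and the rest of the argument goes through unchanged.
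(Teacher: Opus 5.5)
Your proposal is correct and follows the paper's proof essentially step by step. It uses the same three-branch case analysis, the same splitting of $t-\theta_{k+1}$ through $s$ with $|s-\theta_{k+1}|\le\bar\theta$ (coming from $\theta_k\le\zeta_k\le s\le\zeta_{k+1}<\theta_{k+2}$ and (C3)), the same weakening to the common rate $\sigma$, and the same constant $K$. Your Gr\"onwall bound $\|X(t,s)\|\le e^{\sup\|A_0\|\,|t-s|}$ for $\|X(\theta_{k+1},s)\|$ when $s$ and $\theta_{k+1}$ straddle a switching point is a slightly more careful justification of a step the paper covers only by saying the two points lie in the same continuity interval.
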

\begin{proof}
    The proof directly follows from the definition of the Green's function \eqref{G(t,s)}, and the exponential dichotomy estimates \eqref{eq:Zminus}-\eqref{eq:Zplus}.\\
    For an arbitrarily given $t \in \mathbb{R}$, let $j$ denote the unique integer for which $t \in [\theta_j, \theta_{j+1})$, and for any integer $k+1 \leq j$, where $s \in [\zeta_k, \zeta_{k+1}]$, we have
       \begin{align*}
           G(t, s) = Z_-(t, \theta_{k+1}) X(\theta_{k+1}, s)
       \end{align*}
       By utilizing \eqref{eq:Zminus}, we obtain:
       \begin{align*}
           ||G(t, s)||\leq||K_1e^{-\sigma_1(t-\theta_{k+1})}||||X(\theta_{k+1}, s)||
       \end{align*}
       Since both $\theta_{k+1}$ and $s$ belong to the same continuity interval, the term $\|X(\theta_{k+1}, s)\|$ remains bounded by $\bar{M}$. Thus,
       \begin{align*}
           ||G(t, s)|| &\leq K_1\bar{M}e^{-\sigma_1(t-\theta_{k+1})}\\
           &= K_1\bar{M}e^{-\sigma_1(t-s)}e^{-\sigma_1(s-\theta_{k+1})}\\
           &\leq K_1\bar{M}e^{\sigma_1\bar{\theta}}e^{-\sigma_1(t-s)}
       \end{align*}
       Likewise, for any integer $k \geq j$, where $s \in [\zeta_k, \zeta_{k+1}]$, we have
       \begin{align*}
           G(t, s) = Z_+(t, \theta_{k+1}) X(\theta_{k+1}, s)
       \end{align*}
       By utilizing \eqref{eq:Zplus}, we obtain:
       \begin{align*}
           ||G(t, s)||&\leq||K_2e^{\sigma_2(t-\theta_{k+1})}||||X(\theta_{k+1}, s)||\\
           &\leq K_2\bar{M}e^{\sigma_2(t-\theta_{k+1})}\\
           &= K_2\bar{M}e^{\sigma_2(t-s)}e^{\sigma_2(s-\theta_{k+1})}\\
           &\leq K_2\bar{M}e^{\sigma_2\bar{\theta}}e^{\sigma_2(t-s)}
       \end{align*}
       Lastly, if $s\in \widehat{[\zeta_j,t]}$, then
       \begin{align*}
           G(t,s)= X(t,s)
       \end{align*}
       Because $s$ and $t$ are in the same continuity interval,
       \begin{align*}
           ||G(t,s)|| \leq \bar{M}
       \end{align*}
       Choose $\sigma=\min \{\sigma_1,\sigma_2\}$ so that
       \begin{align*}
           ||G(t,s)|| &\leq \bar{M}e^{\sigma|t-s|}e^{-\sigma|t-s|}\\
           &\leq \bar{M}e^{\sigma\bar{\theta}}e^{-\sigma|t-s|} \qquad \text{for $s\in \widehat{[\zeta_j,t]}$}
       \end{align*}
       Then
       \begin{align*}
           ||G(t, s)||\leq 
           \begin{cases}
               K_1\bar{M}e^{\sigma_1\bar{\theta}}e^{-\sigma(t-s)}, \qquad s\leq \zeta_j\\
               K_2\bar{M}e^{\sigma_2\bar{\theta}}e^{\sigma(t-s)}, \qquad s\geq \zeta_j\\
               \bar{M}e^{\sigma\bar{\theta}}e^{-\sigma|t-s|}, \quad \qquad s\in \widehat{[\zeta_j,t]}
           \end{cases} 
       \end{align*}
       Further, choose $K=\max\{K_1 \bar{M} e^{\sigma_1 \bar{\theta}}, K_2 \bar{M} e^{\sigma_2 \bar{\theta}},\bar Me^{\sigma\bar{\theta}}\}$ to get the desired estimate:
       \begin{align*}
           ||G(t,s)|| \leq Ke^{-\sigma|t-s|}, \qquad\forall t,s \in \mathbb{R}
       \end{align*}
\end{proof}
The exponential estimates provided by the dichotomy are crucial for the boundedness of the function $z(t)$ in \eqref{eqnG}. To rigorously establish that $z(t)$ is a well-defined and bounded function on $\mathbb{R}$, we state and prove the following lemma.
\begin{lemma}\label{lemma3}
Suppose that matrices $A_0(t), A_1(t)$ and the function $f(t)$, $t\in\mathbb{R}$ are continuous and uniformly bounded on $\mathbb{R}$ and the conditions (C3), (C4) are  satisfied. If the system \eqref{eq:first} admits an exponential dichotomy, then the function $z(t)$ determined by \eqref{eqnG} is bounded on $\mathbb{R}$.
\end{lemma}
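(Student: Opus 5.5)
The plan is to bound the integral in \eqref{eqnG} directly using the exponential estimate of Lemma \ref{lemma3.2}. Since $f$ is continuous and uniformly bounded, set $\|f\|_\infty:=\sup_{t\in\mathbb{R}}\|f(t)\|<\infty$. For fixed $t\in\mathbb{R}$, with $j$ the unique index such that $t\in[\theta_j,\theta_{j+1})$, the function $s\mapsto G(t,s)f(s)$ is piecewise continuous in $s$. Its pieces are the intervals $[\zeta_k,\zeta_{k+1}]$ and $\widehat{[\zeta_j,t]}$, on each of which $G$ is given by one of the formulas \eqref{G(t,s)} built from continuous factors $Z_\mp(t,\theta_{k+1})$ and $X(\theta_{k+1},s)$, and by (C3) only finitely many such pieces meet any compact set. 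Hence the integrand is locally integrable, and it remains to control the tails.

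Applying \eqref{||G(t,s)||}, I would write
\begin{align*}
\|z(t)\|\le \int_{-\infty}^{\infty}\|G(t,s)\|\,\|f(s)\|\,ds \le K\|f\|_\infty\int_{-\infty}^{\infty}e^{-\sigma|t-s|}\,ds=\frac{2K}{\sigma}\|f\|_\infty .
\end{align*}
This shows at once that the improper integral converges absolutely and that the bound is independent of $t$. Therefore $z$ is well defined and $\sup_{t}\|z(t)\|\le 2K\sigma^{-1}\|f\|_\infty$. For clarity I would split the integral at $s=t$: for $s<t$ the estimate reduces to $K\|f\|_\infty\int_{-\infty}^t e^{-\sigma(t-s)}ds=K\|f\|_\infty/\sigma$, and symmetrically for $s\ge t$.

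The main obstacle is making sure the hypotheses of Lemma \ref{lemma3.2} are genuinely available, since its constant $K$ depends on $\bar M$ and $\bar\theta$. I would justify that $\bar M=\sup\{\|X(t,s)\|: t,s\in[\theta_i,\theta_{i+1}],\ i\in\mathbb{Z}\}$ is finite. By Grönwall's inequality, $\|X(t,s)\|\le e^{\|A_0\|_\infty|t-s|}\le e^{\|A_0\|_\infty\bar\theta}$ using (C3), with $\bar\theta$ the uniform bound on interval lengths. Condition (C4) is used only to guarantee, through Theorem \ref{thm1}, that $Z(t)$ and hence $Z_\pm$ and $G$ are well defined. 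With these points in place, the estimate above completes the proof.
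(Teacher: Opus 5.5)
Your proof is correct and follows the same route as the paper. You split at $s=t$, apply the estimate $\|G(t,s)\|\le Ke^{-\sigma|t-s|}$ of Lemma~\ref{lemma3.2} together with the uniform bound on $f$, and integrate the exponentials to get $\|z(t)\|\le 2K\sigma^{-1}\sup_{s}\|f(s)\|$; your extra remarks on absolute convergence of the improper integral and on the finiteness of $\bar M$ via Gr\"onwall's inequality and (C3) are sound and fill in points the paper leaves implicit.
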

\begin{proof}
Assume that the function $f$ is bounded on the real axis by $\tilde{M}$, and consider   
\begin{align}
    z(t) &= \int\limits_{-\infty}^{\infty}G(t,s)f(s)ds =\int\limits_{-\infty}^tG(t,s)f(s)ds + \int\limits_t^{\infty}G(t,s)f(s)ds\label{z(t)}
\end{align}
Then using the estimate \eqref{||G(t,s)||} and the boundedness of $f$, we have 
\begin{align*}
    ||z(t)|| &\leq\left\|\hspace{0,1cm}\int\limits_{-\infty}^tG(t,s)f(s)ds\hspace{0,1cm}\right\|+\left\|\hspace{0,1cm}\int\limits_{t}^{\infty}G(t,s)f(s)ds\hspace{0,1cm}\right\| \\
    &\leq \int\limits_{-\infty}^t||G(t,s)||||f(s)||ds\hspace{0,1cm}+\int\limits_{t}^{\infty}||G(t,s)||||f(s)||ds\\
    &\leq \int\limits_{-\infty}^t K\tilde{M}e^{-\sigma(t-s)}ds+\int\limits_t^{\infty} K\tilde{M}e^{-\sigma(s-t)}ds\\
    &\leq \frac{K\tilde{M}}{\sigma} + \frac{K\tilde{M}}{\sigma} = \frac{2K\tilde{M}}{\sigma}
\end{align*}
\end{proof}
In hybrid systems, the interaction between continuous dynamics and discrete updates necessitates a rigorous check of the solution's regularity at the switching moments. To ensure that our construction maintains a well-defined trajectory across the entire domain, we establish the continuity of the function $z(t)$ which is represented by \eqref{eqnG} in the following lemma.
\begin{lemma}
Suppose that matrices $A_0(t), A_1(t)$ and the function $f(t)$, $t\in\mathbb{R}$ are continuous and uniformly bounded on $\mathbb{R}$ and the conditions (C3), (C4) are  satisfied. If the system \eqref{eq:first} admits an exponential dichotomy, then the function $z(t)$ determined by \eqref{eqnG} is continuous on $\mathbb{R}$.
\end{lemma}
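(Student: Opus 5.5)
We will prove continuity of $z(t)=\int_{-\infty}^{\infty}G(t,s)f(s)\,ds$ by rewriting it as a sum whose pieces are visibly continuous in $t$. We fix $t$ in a continuity interval $[\theta_j,\theta_{j+1})$ and use the definition \eqref{G(t,s)} to split the integral according to the location of $s$. Since $\zeta_j\in[\theta_j,\theta_{j+1})$, the real line decomposes into the intervals $[\zeta_k,\zeta_{k+1}]$ with $k+1\le j$, those with $k\ge j$, and the local piece $\widehat{[\zeta_j,t]}$. Using $Z_\pm(t,\theta_{k+1})=Z(t)P_\pm Z^{-1}(\theta_{k+1})$, with $P_-=P$ and $P_+=I-P$, we get
\begin{align*}
z(t)=Z(t)P\sum_{k\le j-1}Z^{-1}(\theta_{k+1})\int_{\zeta_k}^{\zeta_{k+1}}X(\theta_{k+1},s)f(s)\,ds
-Z(t)(I-P)\sum_{k\ge j}Z^{-1}(\theta_{k+1})\int_{\zeta_k}^{\zeta_{k+1}}X(\theta_{k+1},s)f(s)\,ds
+\int_{\zeta_j}^{t}X(t,s)f(s)\,ds,
\end{align*}
with the orientation of each piece fixed by the sign conventions implicit in \eqref{G(t,s)}. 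The series converge absolutely by the estimates \eqref{eq:Zminus}--\eqref{eq:Zplus}, (C3) and boundedness of $f$, exactly as in Lemma \ref{lemma3.2} and Lemma \ref{lemma3}.

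\textbf{Continuity inside a continuity interval.} On $(\theta_j,\theta_{j+1})$ the index $j$ is constant. Hence the two series are constant vectors multiplied on the left by $Z(t)P$ and $Z(t)(I-P)$. The matrix $Z(t)$ is continuous, being a solution matrix built from $M_l(t)$ in \eqref{eqn7}. The last term $\int_{\zeta_j}^tX(t,s)f(s)\,ds$ is continuous in $t$ because $X$ and $f$ are continuous. So $z$ is continuous on each open interval and right-continuous at each $\theta_j$.

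\textbf{Continuity at the switching moments.} This is the main obstacle, since the representation changes at $\theta_{j+1}$: one term migrates from the unstable sum to the stable sum, and the local term changes from $\int_{\zeta_j}^t$ to $\int_{\zeta_{j+1}}^t$. We will compute $z(\theta_{j+1}-)$ using index $j$ and $z(\theta_{j+1})$ using index $j+1$, and show they agree. The difference involves only the $k=j$ block. Using $Z(\theta_{j+1})Z^{-1}(\theta_{j+1})=I$ and $P+(I-P)=I$, the migrating term contributes $\int_{\zeta_j}^{\zeta_{j+1}}X(\theta_{j+1},s)f(s)\,ds$. This must be balanced by the change of the local term, namely $\int_{\zeta_j}^{\theta_{j+1}}X(\theta_{j+1},s)f\,ds$ versus $\int_{\zeta_{j+1}}^{\theta_{j+1}}X(\theta_{j+1},s)f\,ds$, whose difference is exactly $\int_{\zeta_j}^{\zeta_{j+1}}X(\theta_{j+1},s)f(s)\,ds$.

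We will carry out this bookkeeping carefully, since the signs and orientations in \eqref{G(t,s)} must be matched. If a mismatch appears, we will fall back on a second approach: verify that $z$ satisfies \eqref{eq:second} on each interval and is the limit of the uniformly convergent partial sums. Continuity would then follow from uniform convergence, because each truncated sum is continuous by the same telescoping cancellation at $\theta_{j+1}$. Uniform convergence is guaranteed by the exponential tail bounds of Lemma \ref{lemma3.2}.
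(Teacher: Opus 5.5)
Your proposal is correct and follows the same route as the paper. On each $[\theta_j,\theta_{j+1})$ you write $z$ as the stable series, minus the unstable series, plus the local integral $\int_{\zeta_j}^t X(t,s)f(s)\,ds$, and at a switching moment you check that the single migrating block $\int_{\zeta_j}^{\zeta_{j+1}}X(\theta_{j+1},s)f(s)\,ds$ is cancelled exactly by moving the local integral's lower limit from $\zeta_j$ to $\zeta_{j+1}$. Your bookkeeping via $Z_-(\theta_{j+1},\theta_{j+1})+Z_+(\theta_{j+1},\theta_{j+1})=P+(I-P)=I$ is the correct form of the paper's computation, which writes this combination as $Z_-(\theta_j,\theta_j)-Z_+(\theta_j,\theta_j)$ through a sign slip, so your fallback plan is unnecessary.
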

\begin{proof}
It is obvious that $z(t)$ is continuous in any interval $(\theta_i,\theta_{i+1}),i\in \mathbb{Z}.$ We only need to check whether it is also continuous at points $\theta_i, i\in\mathbb{Z.}$\\ \indent Fix, $j\in \mathbb{Z},$ and evaluate
\begin{align*}
z(\theta_j+) 
&= \int_{-\infty}^{\infty} G(\theta_j+, s) f(s)\, ds = \int_{-\infty}^{\theta_j+} G(\theta_j+, s) f(s)\, ds 
+ \int_{\theta_j+}^{\infty} G(\theta_j+, s) f(s)\, ds \\
&= \sum_{k=-\infty}^j Z_-(\theta_j, \theta_k) 
\int_{\zeta_{k-1}}^{\zeta_k} X(\theta_k, s) f(s)\, ds 
+ \int_{\zeta_j}^{\theta_j} X(\theta_j) P X^{-1}(s) f(s)\, ds \\
&\quad - \sum_{k=j+1}^{\infty} Z_+(\theta_j, \theta_k) 
\int_{\zeta_{k-1}}^{\zeta_k} X(\theta_k, s) f(s)\, ds 
- \int_{\theta_j}^{\zeta_j} X(\theta_j) (I - P) X^{-1}(s) f(s)\, ds
\end{align*}
and
\begin{align*}
z(\theta_j-) 
&= \int_{-\infty}^{\infty} G(\theta_j-, s) f(s)\, ds = \int_{-\infty}^{\theta_j-} G(\theta_j-, s) f(s)\, ds 
+ \int_{\theta_j-}^{\infty} G(\theta_j-, s) f(s)\, ds \\
&= \sum_{k=-\infty}^{j-1} Z_-(\theta_j, \theta_k) 
\int_{\zeta_{k-1}}^{\zeta_k} X(\theta_k, s) f(s)\, ds 
+ \int_{\zeta_{j-1}}^{\theta_j} X(\theta_j) P X^{-1}(s) f(s)\, ds \\
&\quad - \sum_{k=j}^{\infty} Z_+(\theta_j, \theta_k) 
\int_{\zeta_{k-1}}^{\zeta_k} X(\theta_k, s) f(s)\, ds 
- \int_{\theta_j}^{\zeta_{j-1}} X(\theta_j) (I - P) X^{-1}(s) f(s)\, ds
\end{align*}
\indent Subtract from the first formula the second:
\begin{align*}
&z(\theta_j+)-z(\theta_j-)= \\
&\quad \sum_{k=-\infty}^j Z_-(\theta_j, \theta_k) 
\int_{\zeta_{k-1}}^{\zeta_k} X(\theta_k, s) f(s)\, ds -\sum_{k=-\infty}^{j-1} Z_-(\theta_j, \theta_k) 
\int_{\zeta_{k-1}}^{\zeta_k} X(\theta_k, s) f(s)\, ds\\
&\quad - \sum_{k=j+1}^{\infty} Z_+(\theta_j, \theta_k) 
\int_{\zeta_{k-1}}^{\zeta_k} X(\theta_k, s) f(s)\, ds +\sum_{k=j}^{\infty} Z_+(\theta_j, \theta_k) 
\int_{\zeta_{k-1}}^{\zeta_k} X(\theta_k, s) f(s)\, ds \\
&\quad +\int_{\zeta_j}^{\theta_j} X(\theta_j) P X^{-1}(s) f(s)\, ds-\int_{\zeta_{j-1}}^{\theta_j} X(\theta_j) P X^{-1}(s) f(s)\, ds\\
&\quad - \int_{\theta_j}^{\zeta_j} X(\theta_j) (I - P) X^{-1}(s) f(s)\, ds+\int_{\theta_j}^{\zeta_{j-1}} X(\theta_j) (I - P) X^{-1}(s) f(s)\, ds\\
&=Z_-(\theta_j,\theta_j)\int_{\zeta_{j-1}}^{\zeta_j}X(\theta_j,s)f(s)ds-Z_+(\theta_j,\theta_j)\int_{\zeta_{j-1}}^{\zeta_j}X(\theta_j,s)f(s)ds\\
&\quad - \int\limits_{\zeta_{j-1}}^{\zeta_j}X(\theta_j)PX^{-1}(s)f(s)ds-\int\limits_{\zeta_{j-1}}^{\zeta_j}X(\theta_j)(I-P)X^{-1}(s)f(s)ds\\
&=[Z_-(\theta_j,\theta_j)-Z_+(\theta_j,\theta_j)]\int_{\zeta_{j-1}}^{\zeta_j}X(\theta_j,s)f(s)ds-\int_{\zeta_{j-1}}^{\zeta_j}X(\theta_j,s)f(s)\\
&=\int_{\zeta_{j-1}}^{\zeta_j}X(\theta_j,s)f(s)ds-\int_{\zeta_{j-1}}^{\zeta_j}X(\theta_j,s)f(s)=0
\end{align*}
Hence $z(t)$ is continuous on $\mathbb{R}.$
\end{proof}
\begin{remark}
   Although the continuity of the function $z(t)$ represented by \eqref{eqnG} is explicitly demonstrated in the proof through detailed summations, it can also be understood directly from \eqref{z(t)}. In this representation, the first integral has an upper limit of $t$ and the second has a lower limit of $t$, ensuring that both components are continuous functions of $t$. Since the summation of continuous functions is itself continuous, it follows that the function $z(t)$ is continuous across the entire real line. We have provided the expanded summation-based proof primarily to offer a more transparent view of the interactions at the switching moments $\theta_i$ for all indices $i$.
\end{remark}
Having established the boundedness on the entire real axis and the continuity of $z(t)$ represented by \eqref{eqnG}, we now proceed to verify that this representation indeed satisfies the nonhomogeneous system \eqref{eq:second}. These properties are not merely auxiliary; they provide the necessary analytical foundation to ensure that $z(t)$ remains a well-defined solution across the entire real line. Consequently, the lemmas provided above serve as the basis for the following formal confirmation that this representation constitutes a solution to the differential equation under investigation.
\begin{lemma}
If the associated homogeneous system \eqref{eq:first} admits an exponential dichotomy, then the function $z(t)$ determined by \eqref{eqnG}
is the solution of equation \eqref{eq:second} in the sense of Definition \ref{solndef}. 
\end{lemma}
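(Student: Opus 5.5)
We will verify Definition \ref{solndef} directly. Continuity of $z$ on $\mathbb{R}$ is already given by the preceding lemma, and boundedness by Lemma \ref{lemma3}. So it remains to fix $j\in\mathbb{Z}$ and show that on $[\theta_j,\theta_{j+1})$ the function $z$ is differentiable (right-differentiable at $\theta_j$) and satisfies $z'(t)=A_0(t)z(t)+A_1(t)z(\zeta_j)+f(t)$.

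\textbf{Step 1: decomposition of $z(t)$.} For $t\in[\theta_j,\theta_{j+1})$ we split the integral \eqref{eqnG} according to \eqref{G(t,s)} into three pieces:
\begin{align*}
z(t)=\sum_{k+1\le j} Z_-(t,\theta_{k+1})\,c_k+\sum_{k\ge j} Z_+(t,\theta_{k+1})\,c_k+\int_{\zeta_j}^{t}X(t,s)f(s)\,ds,
\end{align*}
where $c_k=\int_{\zeta_k}^{\zeta_{k+1}}X(\theta_{k+1},s)f(s)\,ds$ and the last integral carries the orientation sign fixed by $\widehat{[\zeta_j,t]}$. The sums converge uniformly on the interval by Lemma \ref{lemma3.2}. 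We will need to adjust the bookkeeping of the pieces overlapping $[\zeta_{j-1},\zeta_j]$ and $[\zeta_j,\zeta_{j+1}]$ so that each $s$ is counted exactly once. We will check this against the explicit expansions used in the continuity lemma.

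\textbf{Step 2: differentiate term by term.} Each column of $Z_\pm(t,\theta_{k+1})=Z(t)(\cdot)Z^{-1}(\theta_{k+1})$ solves \eqref{eq:first} in $t$. Since $\gamma(t)=\zeta_j$ on the interval, the derivative of the first two pieces, call their sum $u(t)$, is $A_0(t)u(t)+A_1(t)u(\zeta_j)$. Differentiation under the sums is justified by uniform convergence of the differentiated series: on $[\theta_j,\theta_{j+1})$ we have $\|A_0\|,\|A_1\|$ bounded, and $u(\zeta_j)$ is bounded by the same exponential estimate. For the third piece $w(t)=\int_{\zeta_j}^t X(t,s)f(s)\,ds$ we get $w'(t)=A_0(t)w(t)+f(t)$, and $w(\zeta_j)=0$.

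\textbf{Step 3: recombine.} Since $w(\zeta_j)=0$, we have $z(\zeta_j)=u(\zeta_j)$. Adding the two derivatives gives
\[
z'=A_0(u+w)+A_1u(\zeta_j)+f=A_0z+A_1z(\zeta_j)+f,
\]
which is \eqref{eq:second}. At $\theta_j$ the same computation yields the right derivative, since all pieces are one-sidedly smooth there.

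\textbf{Main obstacle.} The key difficulty is Step 1: confirming that the pieces of \eqref{G(t,s)} indexed by $s\in[\zeta_k,\zeta_{k+1}]$ combine with the local term so that the nonlocal part is a genuine solution of the homogeneous equation on the whole interval. This means the $A_1$-coupling must appear through the value at $\zeta_j$ and not through $X$ alone. This requires the relation $Z(t)=M_j(t)Z(\zeta_j)$ on $[\theta_j,\theta_{j+1})$, which follows from \eqref{eqn7} and the definition of $M_j$. We would invoke it to rewrite $Z_\pm(t,\theta_{k+1})=M_j(t)Z_\pm(\zeta_j,\theta_{k+1})$ and then differentiate $M_j$ explicitly: $M_j'(t)=A_0M_j(t)+A_1(t)$ and $M_j(\zeta_j)=I$.
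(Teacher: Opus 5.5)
Your Steps 1--3 follow the paper's own argument. On $[\theta_j,\theta_{j+1})$ the paper also writes $z$ as nonlocal sums with the $t$-independent coefficients $c_k$, plus the local term $\int_{\zeta_j}^{t}X(t,s)f(s)\,ds$. It then differentiates termwise using $Z'=A_0Z+A_1Z(\gamma(t))$, and uses that the local term vanishes at $\gamma(t)=\zeta_j$. Your identity $Z(t)=M_j(t)Z(\zeta_j)$ with $M_j'=A_0M_j+A_1$, $M_j(\zeta_j)=I$, and your uniform-convergence remark, make explicit what the paper takes for granted.

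What needs fixing is the adjustment you announce in Step 1: it must \emph{not} be made. The sums over $k\le j-1$ and $k\ge j$ already cover every $s\in\mathbb{R}$. The local integral over $\widehat{[\zeta_j,t]}$ is added on top, so $s$ in the overlap is deliberately counted twice. This is exactly what the paper's expansions do: its two local pieces combine to $\int_{\zeta_j}^tX(t)PX^{-1}(s)f(s)\,ds-\int_t^{\zeta_j}X(t)(I-P)X^{-1}(s)f(s)\,ds=\int_{\zeta_j}^tX(t,s)f(s)\,ds$, and they sit alongside the full sums.

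Suppose instead you removed $\widehat{[\zeta_j,t]}$ from the overlapping coefficient, for instance replacing $c_j$ by $\int_t^{\zeta_{j+1}}X(\theta_{j+1},s)f(s)\,ds$ when $t>\zeta_j$. Then that term is no longer a homogeneous solution. Differentiating the moving limit produces the extra term $Z_+(t,\theta_{j+1})X(\theta_{j+1},t)f(t)$ (up to sign). Its $A_1$-part also no longer matches $z(\zeta_j)$, which still contains the full $c_j$. So Step 2 breaks. Your ``main obstacle'' is resolved precisely by keeping the full, $t$-independent $c_k$, i.e.\ by reading the third line of \eqref{G(t,s)} as added to, not replacing, the first two lines on $\widehat{[\zeta_j,t]}$.

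Also put a minus sign in front of the $Z_+$ sum, as in the paper's expansions. For the equation on the open interval the sign is immaterial. But with your $+$ sign, the jump of $z$ at $\theta_j$ is $-2Z_+(\theta_j,\theta_j)c_{j-1}$, which is nonzero in general. The continuity lemma you import would then not apply to your function.
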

\begin{proof} Fix an arbitrary integer $j \in \mathbb{Z}$ and consider $t \in [\theta_j, \theta_{j+1})$. For any $t \in (\theta_j, \theta_{j+1})$, we compute the derivative $z'(t)$ as follows
\begin{align*}
    z'(t) &= \frac{d}{dt}\left[\int\limits_{-\infty}^{\infty}G(t,s)f(s)ds\right] = \frac{d}{dt}\left[\int\limits_{-\infty}^tG(t,s)f(s)ds + \int\limits_t^{\infty}G(t,s)f(s)ds\right] \\
    &= \frac{d}{dt}\left[\sum_{k=-\infty}^j Z_-(t, \theta_k) \int_{\zeta_{k-1}}^{\zeta_k} X(\theta_k, s) f(s)\, ds \right] + \frac{d}{dt}\left[\int_{\zeta_j}^{t} X(t) P X^{-1}(s) f(s)\, ds\right] \\
    &\quad - \frac{d}{dt}\left[\sum_{k=j+1}^{\infty} Z_+(t, \theta_k) \int_{\zeta_{k-1}}^{\zeta_k} X(\theta_k, s) f(s)\, ds\right] - \frac{d}{dt}\left[\int_{t}^{\zeta_j} X(t) (I - P) X^{-1}(s) f(s)\, ds\right] \\
    &= \sum\limits_{k=-\infty}^j\left[ \frac{d}{dt}Z_-(t,\theta_k) \int\limits_{\zeta_{k-1}}^{\zeta_k} X(\theta_k,s) f(s) \, ds\right] - \sum\limits_{k=j+1}^{\infty}\left[ \frac{d}{dt}Z_+(t,\theta_k) \int\limits_{\zeta_{k-1}}^{\zeta_k} X(\theta_k,s) f(s) \, ds\right] \\
    &\quad + \frac{d}{dt}\left[\int\limits_{\zeta_j}^t X(t,s) f(s) \, ds \right] \\
\intertext{Then, by using the definition of $Z_-(t,s)$ and $Z_+(t,s)$ as given in \eqref{EXPDICH}, we get}
    z'(t) &= \sum\limits_{k=-\infty}^j\left[ \frac{dZ(t)}{dt}PZ^{-1}(\theta_k) \int\limits_{\zeta_{k-1}}^{\zeta_k} X(\theta_k,s) f(s) \, ds\right] \\
    &\quad - \sum\limits_{k=j+1}^{\infty}\left[ \frac{dZ(t)}{dt}(I-P)Z^{-1}(\theta_k) \int\limits_{\zeta_{k-1}}^{\zeta_k} X(\theta_k,s) f(s) \, ds\right] + \int\limits_{\zeta_j}^t \frac{dX(t,s)}{dt} f(s) \, ds + X(t,t)f(t) \\
\intertext{Substituting $\frac{dZ(t)}{dt}=A_0(t)Z(t)+A_1(t)Z(\gamma(t))$ and $\frac{dX(t,s)}{dt}=A_0(t)X(t,s)$ in the previous equation, we have}
    z'(t) &= \sum\limits_{k=-\infty}^j\left\{[ A_0(t)Z(t)+A_1(t)Z(\gamma(t))]PZ^{-1}(\theta_k) \int\limits_{\zeta_{k-1}}^{\zeta_k} X(\theta_k,s) f(s) \, ds\right\} \\
    &\quad - \sum\limits_{k=j+1}^{\infty}\left\{[ A_0(t)Z(t)+A_1(t)Z(\gamma(t))](I-P)Z^{-1}(\theta_k) \int\limits_{\zeta_{k-1}}^{\zeta_k} X(\theta_k,s) f(s) \, ds\right\} \\
    &\quad + \int\limits_{\zeta_j}^t[ A_0(t)X(t,s) f(s)] \, ds + f(t) \\
\intertext{and observe that}
    z'(t) &= A_0(t)\left[\sum\limits_{k=-\infty}^j Z_-(t,\theta_k) \int\limits_{\zeta_{k-1}}^{\zeta_k} X(\theta_k,s) f(s) \, ds \right. \\
    &\quad \left. - \sum\limits_{k=j+1}^\infty Z_+(t,\theta_k) \int\limits_{\zeta_{k-1}}^{\zeta_k} X(\theta_k,s) f(s) \, ds + \int\limits_{\zeta_j}^t X(t,s) f(s) \, ds\right]\\
    &\quad + A_1(t)\left[\sum\limits_{k=-\infty}^j Z_-(\gamma(t),\theta_k) \int\limits_{\zeta_{k-1}}^{\zeta_k} X(\theta_k,s) f(s) \, ds \right. \\
    &\quad \left. - \sum\limits_{k=j+1}^\infty Z_+(\gamma(t),\theta_k) \int\limits_{\zeta_{k-1}}^{\zeta_k} X(\theta_k,s) f(s) \, ds\right] \\
    &\quad + A_1(t)\int\limits_{\zeta_j}^{\zeta_j} X(t,s) f(s) \, ds + f(t) \\
    &= A_0(t)\left[\int\limits_{-\infty}^{t}G(t,s)f(s)ds+\int\limits_{t}^{\infty}G(t,s)f(s)ds\right] \\
    &\quad + A_1(t)\left[\int\limits_{-\infty}^{\gamma(t)}G(\gamma(t),s)f(s)ds+\int\limits_{\gamma(t)}^{\infty}G(\gamma(t),s)f(s)ds\right] + f(t) \\
    &= A_0(t)\left[\int\limits_{-\infty}^{\infty}G(t,s)f(s)ds\right] + A_1(t)\left[\int\limits_{-\infty}^{\infty}G(\gamma(t),s)f(s)ds\right] + f(t)\\
    &= A_0(t)z(t) + A_1(t)z(\gamma(t)) + f(t).
\end{align*}
At $t = \theta_i$, according to Definition \ref{solndef}, we must verify that the equation holds for the right-hand derivative $z'(\theta_i+)$. Since the coefficients $A_0(t), A_1(t)$, the forcing term $f(t)$, and the piecewise constant argument $\gamma(t)$ are all right-continuous at each switching moment, their combination in the differential equation is well-defined. Consequently, the right-hand derivative $z'(\theta_i+)$ exists and is equal to the right-hand limit of the expression$$z'(\theta_i+) = \lim_{t \to \theta_i+} z'(t) = A_0(\theta_i)z(\theta_i) + A_1(\theta_i)z(\gamma(\theta_i)) + f(\theta_i)$$This confirms that $z(t)$ satisfies the equation at each point $\theta_i$ in the sense of the right-hand derivative, fulfilling the final requirement of the solution definition. 
Thus the lemma is proved.
\end{proof}
\begin{remark} \label{remark_uniqueness}
Having established the existence of a bounded solution, we emphasize that its uniqueness is a direct consequence of the structural properties of an exponential dichotomy. According to Coppel's framework, a system possesses an exponential dichotomy on $\mathbb{R}$ if and only if the corresponding homogeneous equation has no nontrivial solution bounded on $\mathbb{R}$ \cite{14}. This fundamental principle remains valid for our model, as the presence of a piecewise constant argument does not alter the underlying dichotomic structure. Since the homogeneous part of the system \eqref{eq:first} satisfies these requirements across the entire real line, the solution $z(t)$ represented in \eqref{eqnG} is the unique bounded almost periodic solution of equation \eqref{eq:second}.
\end{remark}
\section{Existence of the Almost Periodic Solution}
\indent \indent In EPCAG systems, the almost periodicity of the solution depends on more than just the continuous functions; it is also determined by the distribution of the switching moments. For the solution to remain almost periodic, the shifts in the continuous functions must be synchronized with the shifts in the discrete sequences of the arguments.\\
\indent The following assertion is a specific one.
\begin{lemma} \cite{1}
Assume that the functions $\phi_j(t), j=1,2,...,k$ are almost periodic in $t$, $\{\zeta_i^j\}_i$, $j\in \mathbb{Z}$, are uniformly almost periodic and $\inf\limits_\mathbb{Z}\zeta_i^1>0$. Then, for arbitrary $\eta>0$, $0<\nu<\eta$, there exist a relatively dense set of real numbers $\Omega$ and integers $Q$, such that for $\omega\in \Omega$, $q\in Q$, it is true that 
      \begin{enumerate}
          \item $||\phi_j(t+\omega)-\phi_j(t)||<\eta$, $j=1,2,...,k$,\quad $t\in \mathbb{R}$;
          \item $|\theta_i^q-\omega| < \nu$, \quad $i\in \mathbb{Z}$;
          \item $|\zeta_i^q-\omega| < \nu$, \quad $i\in \mathbb{Z}$.
      \end{enumerate} \label{lemma4}
 \end{lemma}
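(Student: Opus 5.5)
The plan is to combine Bochner-type compactness of the joint almost periodic object with the equipotential almost periodicity of the sequences $\{\theta_i^j\}$ and $\{\zeta_i^j\}$. The statement asks for translations $\omega$ of the functions $\phi_j$ that also nearly coincide with integer shifts $\theta_i^q$ and $\zeta_i^q$, uniformly in $i$. I would follow the classical route of Samoilenko--Perestyuk for the analogous lemma on impulsive systems.

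\textbf{Step 1: reduction to one function and one sequence.} Since the hypotheses involve only $\{\zeta_i^j\}$ (with $\inf\zeta_i^1>0$), and $\theta_i$ enters through (C2), I treat $\{\theta_i^j\}$ and $\{\zeta_i^j\}$ together. By (C2), for $\nu/2$ there is a relatively dense set of integers $p$ that are common $\nu/2$-almost periods of all the sequences $\{\theta_i^j\}_i$ and $\{\zeta_i^j\}_i$, $j\in\mathbb{Z}$. Concretely,
\[
|\theta_{i+p}^j-\theta_i^j|<\nu/2, \qquad |\zeta_{i+p}^j-\zeta_i^j|<\nu/2 \quad\text{for all } i,j .
\]
Taking $j=q$ and using $\theta_{i+p}^q-\theta_i^q=\theta_{i+q}^p-\theta_i^p$, a common almost period $q$ gives $|\theta_i^q-\theta_0^q|<\nu/2$ for all $i$, and likewise for $\zeta$. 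So the candidate is $\omega\approx\theta_0^q$, and I must show that such $\omega$ can be chosen as a common $\eta$-translation number of $\Phi=(\phi_1,\dots,\phi_k)$ with $|\theta_0^q-\omega|<\nu/2$ and $|\zeta_0^q-\omega|<\nu/2$.

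\textbf{Step 2: joint almost periodicity.} Consider the sequence of pairs $b_q=(\Phi(\cdot+\theta_0^q),\ \theta_0^q-\zeta_0^q)$, together with the shifts of the sequences. The key claim is that the map $q\mapsto \Phi(\cdot+\theta_0^q)$, with values in the Banach space of bounded uniformly continuous functions, is an almost periodic sequence. I would prove this via the Bochner criterion: from any sequence of integers one extracts a subsequence along which $\Phi(\cdot+\theta_0^{q_m})$ converges uniformly, by compactness of the hull of $\Phi$. By equipotential almost periodicity and $\theta_0^{q+p}=\theta_0^q+\theta_q^p$, the shifts $\{\theta_q^p\}$ are also uniformly approximable. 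Then the standard fact applies that a finite family of almost periodic sequences has a relatively dense set of common $\epsilon$-almost periods. This yields a relatively dense set $Q$ of integers $q$ with
\[
\|\Phi(t+\theta_0^q)-\Phi(t)\|<\eta/2 \quad\text{and}\quad |\theta_i^q-\zeta_i^q|<\nu/2 \quad\text{for all } t,i .
\]
The second bound holds because $\zeta_i^q-\theta_i^q=(\zeta_{i+q}-\theta_{i+q})-(\zeta_i-\theta_i)$ and $\{\zeta_i-\theta_i\}$ is an almost periodic sequence.

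\textbf{Step 3: choice of $\omega$ and relative density.} Set $\omega=\theta_0^q$ for $q\in Q$ and let $\Omega$ be the set of these values. Items 2 and 3 then follow from Steps 1--2 by the triangle inequality, and item 1 holds by construction. Relative density of $\Omega$ in $\mathbb{R}$ follows from relative density of $Q$ in $\mathbb{Z}$ together with (C3): consecutive elements of $Q$ differ by at most $L$, so the corresponding $\omega$ differ by at most $L\bar\theta$.

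The main obstacle is Step 2, namely showing that $q\mapsto\Phi(\cdot+\theta_0^q)$ is almost periodic as a sequence. This needs the uniform continuity of $\Phi$ combined with the fact that $\theta_0^{q+p}-\theta_0^q-\theta_0^p$ is small uniformly for common almost periods $p$. I expect to handle it by an $\epsilon/3$ argument on the hull, following Samoilenko--Perestyuk, Lemma 22 for impulsive systems.
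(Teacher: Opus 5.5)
The paper gives no proof of this lemma; it quotes it from earlier work, so your argument has to stand on its own. Steps~1 and~3 are correct. For a common almost period $q$ of the family, $\theta_i^q-\theta_0^q=\theta_q^i-\theta_0^i$, so $|\theta_i^q-\theta_0^q|<\nu/2$. The set $\{\theta_0^q:q\in Q\}$ is relatively dense because the gaps $\theta_i^1$ are bounded. The gap is Step~2, which carries the entire content of the lemma. Since
\[
\|\Phi(\cdot+\theta_0^{q+p})-\Phi(\cdot+\theta_0^{q})\|_\infty=\sup_{s\in\mathbb{R}}\|\Phi(s+\theta_q^p)-\Phi(s)\|,
\]
almost periodicity of $b_q=\Phi(\cdot+\theta_0^q)$ is, given uniform continuity of $\Phi$ and $\theta_q^p\approx\theta_0^p$, essentially equivalent to having a relatively dense set of common almost periods $p$ for which $\theta_0^p$ is a translation number of $\Phi$. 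That is the lemma itself. So the $\varepsilon/3$ argument you announce at the end is circular: it only reduces $\|b_{q+p}-b_q\|$ to $\sup_s\|\Phi(s+\theta_0^p)-\Phi(s)\|$, which you have no means of making small. Your Bochner sketch does not close the gap either. Extracting a subsequence along which $\Phi(\cdot+\theta_0^{q_m})$ converges only shows that the \emph{range} $\{b_q\}$ is precompact, and a sequence with precompact range need not be almost periodic. Bochner's criterion needs, for every integer sequence $h_m$, a subsequence along which $q\mapsto b_{q+h_m}$ converges \emph{uniformly in} $q$.

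The missing ingredient is uniform-in-$q$ compactness of the shifts. Write $b_{q+h_m}=\Phi(\cdot+\theta_0^{h_m}+\theta_{h_m}^q)$. Compactness of the hull of $\Phi$ gives a subsequence along which $\Phi(\cdot+\theta_0^{h_m})$ converges uniformly. You also need a further subsequence with $\sup_q|\theta_{h_m}^q-\theta_{h_{m'}}^q|\to0$. This holds because equipotential almost periodicity says exactly that $i\mapsto(\theta_i^j)_{j\in\mathbb{Z}}$ is Bohr almost periodic in the sup-metric, so its range is totally bounded. Indeed, every $i$ can be written $i=p+r$ with $p$ a common $\varepsilon$-almost period and $0\le r\le L$, whence $\sup_j|\theta_i^j-\theta_r^j|<\varepsilon$. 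Uniform continuity of $\Phi$ then gives $\sup_q\|b_{q+h_m}-b_{q+h_{m'}}\|_\infty\to0$, Bochner's criterion makes $b$ almost periodic, and the rest of your plan goes through. An alternative route is to show $\theta_i=ai+c_i$ with $a=\lim\theta_i/i$ and $\{c_i\}$ an almost periodic sequence. Then use that $q\mapsto\Phi(\cdot+aq)$ is almost periodic as the restriction to $\mathbb{Z}$ of an almost periodic function. Finally, your assertion that $\{\zeta_i-\theta_i\}$ is almost periodic also needs a short proof. Set $d_i=\zeta_i-\theta_i\in[0,\bar\theta)$. For a common $\varepsilon$-almost period $p$ of both families, the differences $d_{k+p}-d_k$ lie within $2\varepsilon$ of a constant $c_p$. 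Telescoping $d_{k+np}-d_k$ and using boundedness of $d$ forces $|c_p|\le2\varepsilon$.
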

To establish our main results, we require a diagonal almost periodicity estimate for the Green's function. To this end, we first define the positive constants $\tilde{\sigma} = \frac{\sigma}{2}$ and $\tilde{K} = \max \{ \frac{2K^2\bar{M} e^{3\sigma\bar{\theta}}}{\sigma}, \frac{2K^2e^{2\sigma\bar{\theta}}}{\sigma}, \frac{2K\bar{M}e^{3\sigma\bar{\theta}}}{\sigma}, \bar{M}^2\bar{\theta}e^{\frac{\sigma}{2}\bar{\theta}}\}.$ \\
\indent In view of Lemma \ref{lemma4}, we formulate the following auxiliary lemma to provide the desired diagonal almost periodicity estimate for all $t,s \in \mathbb{R}$.

\begin{lemma} \label{DiagAP}
    Let $\omega$ be a common $\eta$-almost period of matrices $A_0(t)$ and $A_1(t)$. Then the Green's function satisfies 
    \begin{align}
        ||G(t+\omega,s+\omega)-G(t,s)|| \leq \tilde{K}\eta e^{-\tilde{\sigma}|t-s|}. \label{G-G}
    \end{align}
\end{lemma}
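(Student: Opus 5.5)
The plan is the classical Coppel-type argument for diagonal almost periodicity, adapted to the piecewise structure of \eqref{G(t,s)}. We take $\omega$ together with an integer $q$ as in Lemma \ref{lemma4}, so that $|\theta_i^q-\omega|<\nu$ and $|\zeta_i^q-\omega|<\nu$ with $\nu<\eta$. Then shifting $t\in[\theta_j,\theta_{j+1})$ by $\omega$ lands, up to an error of order $\eta$ near the endpoints, in $[\theta_{j+q},\theta_{j+q+1})$. Likewise $s\in[\zeta_k,\zeta_{k+1}]$ is carried to $[\zeta_{k+q},\zeta_{k+q+1}]$. The index structure of $G(t+\omega,s+\omega)$ therefore coincides with that of $G(t,s)$ after the relabeling $k\mapsto k+q$, $j\mapsto j+q$. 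The exceptional $s$ (or $t$) lying within $\nu$ of a switching point contribute only on sets of length $O(\eta)$. There we bound both Green's functions by Lemma \ref{lemma3.2}, which produces the term $\bar M^2\bar\theta e^{\sigma\bar\theta/2}$ in $\tilde K$.

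\textbf{Step 1: local factors.} We first treat $X(t+\omega,s+\omega)-X(t,s)$ for $t,s$ in one continuity interval. Put $W(t)=X(t+\omega,s+\omega)-X(t,s)$. It solves $W'=A_0(t)W+[A_0(t+\omega)-A_0(t)]X(t+\omega,s+\omega)$ with $W(s)=0$. The variation of constants formula then gives $\|W\|\le \bar M^2\bar\theta\,\eta$. The same argument, using also $\|A_1(t+\omega)-A_1(t)\|<\eta$ and the displacement of $\zeta_i$ by at most $\nu$, gives $\|M_{i+q}(t+\omega)-M_i(t)\|=O(\eta)$.

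\textbf{Step 2: dichotomy parts.} This is the main obstacle. We must estimate $Z_\pm(t+\omega,\theta_{k+q+1})-Z_\pm(t,\theta_{k+1})$ with exponential decay. Expanding the product representation \eqref{eqn7} factor by factor would give an error growing linearly in $j-k$, which is unacceptable. Instead we use the perturbation identity. Set $U(t)=Z_-(t+\omega,\theta_{k+q+1})-Z_-(t,\theta_{k+1})$ (treating $\theta_{k+q+1}\approx\theta_{k+1}+\omega$ within $\nu$). Then $U$ satisfies the inhomogeneous EPCAG \eqref{eq:second} with forcing
\[
F(t)=[A_0(t+\omega)-A_0(t)]Z_-(t+\omega,\cdot)+[A_1(t+\omega)-A_1(t)]Z_-(\gamma(t+\omega),\cdot)
\]
plus $O(\eta)$ terms from the mismatch of $\gamma$. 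Its bounded-in-the-right-sense version is represented through the Green's function, so
\[
U(t)=\int_{-\infty}^{\infty}G(t,u)F(u)\,du .
\]
This representation relies on the uniqueness of bounded solutions from Remark \ref{remark_uniqueness}, applied columnwise after splitting into $P$ and $I-P$ parts. Using $\|F(u)\|\le \eta K e^{-\sigma|u-s|}$ and Lemma \ref{lemma3.2}, we obtain
\[
\|U(t)\|\le \eta K^2\int_{\mathbb{R}}e^{-\sigma|t-u|}e^{-\sigma|u-s|}\,du\le \frac{2K^2}{\sigma}\eta\,e^{-\frac{\sigma}{2}|t-s|},
\]
via the standard inequality $e^{-\sigma|t-u|-\sigma|u-s|}\le e^{-\frac\sigma2|t-s|}e^{-\frac\sigma2(|t-u|+|u-s|)}$. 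The factors $e^{2\sigma\bar\theta}$ and $e^{3\sigma\bar\theta}$ absorb the shifts between $s$, $\theta_{k+1}$ and $\zeta_k$.

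\textbf{Step 3: assembly.} Finally, we write each case of \eqref{G(t,s)} as a difference of products, $AB-A'B'=(A-A')B+A'(B-B')$. We combine Step 2 with the bound $\bar M$ on $X$, and Lemma \ref{lemma3.2} with Step 1, and take the maximum over cases. This yields \eqref{G-G} with $\tilde\sigma=\sigma/2$ and the stated $\tilde K$.
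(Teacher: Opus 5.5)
The decisive gap is in Step~2, which carries the whole argument.

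\textbf{Step~2 does not go through.} The function $U(t)=Z_-(t+\omega,\theta_{k+q+1})-Z_-(t,\theta_{k+1})$ is not a bounded solution on $\mathbb{R}$. The dichotomy controls $Z_-(t,\theta)=Z(t)PZ^{-1}(\theta)$ only for $t\ge\theta$; for $P\neq 0$ it is in general unbounded as $t\to-\infty$, and so is $U$. For the same reason your bound $\|F(u)\|\le\eta Ke^{-\sigma|u-s|}$ is false for $u<\theta_{k+1}$.

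So uniqueness of bounded solutions (Remark \ref{remark_uniqueness}) cannot be invoked. The phrase ``bounded-in-the-right-sense version, applied columnwise after splitting into $P$ and $I-P$ parts'' does not describe an actual argument. If you restrict $U$ to $t\ge\theta_{k+1}$, you must control its initial value. That value is essentially the difference of the projections $Z(\tau)PZ^{-1}(\tau)$ at $\tau=\theta_{k+1}+\omega$ and $\tau=\theta_{k+1}$. Showing these shifted dichotomy projections are $\eta$-close is exactly the nontrivial content of the lemma, so the factor-by-factor plan (Steps 1--3) is circular at its core.

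\textbf{The missing idea.} Apply the bounded-solution argument to the whole difference $W(t)=G(t+\omega,s+\omega)-G(t,s)$ with $s$ fixed, which is the object the paper works with. Because $W$ contains both the $Z_-$ and the $Z_+$ pieces:
\begin{itemize}
\item it is bounded on all of $\mathbb{R}$;
\item the unit jumps of the two Green's functions at $t=s$ cancel;
\item it solves the inhomogeneous EPCAG with the forcing $\mathcal{D}(u,s)$ of the paper, built from $G(u+\omega,s+\omega)$ and hence bounded by $C\eta e^{-\sigma|u-s|}$ for every $u$;
\item the extra $\gamma$-mismatch terms are, for $s$ away from the $\zeta_k$, of size $O(1)$ only on sets of length $O(\nu)$ per interval.
\end{itemize}
Then $W(t)=\int_{\mathbb{R}}G(t,u)(\cdots)\,du$, and your convolution estimate finishes directly; Steps~1 and~3 become unnecessary. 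The paper instead writes a one-sided formula starting from $W_-(s,s)=0$, with $Z_-$ in place of the full Cauchy matrix. That glosses over the same projection issue, since $W$ does not vanish at $t=s$ a priori. The two-sided representation is the sound version.

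\textbf{The exceptional $s$ are not handled as you claim.} Bounding both Green's functions by Lemma \ref{lemma3.2} gives $2Ke^{-\sigma|t-s|}$ with no factor $\eta$, so it does not yield the asserted pointwise inequality. ``Contributing only on sets of length $O(\eta)$'' is an integral statement, not a pointwise one. It also does not produce the constant $\bar M^2\bar\theta e^{\sigma\bar\theta/2}$; in the paper that term comes from the local estimate of $X(t+\omega,s+\omega)-X(t,s)$, i.e.\ your Step~1.

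In fact no argument can give a factor $\eta$ there. In general $G(t,\cdot)$ jumps at $s=\zeta_k$, because $Z_-(t,\theta_k)X(\theta_k,\zeta_k)\neq Z_-(t,\theta_{k+1})X(\theta_{k+1},\zeta_k)$. For $s$ between $\zeta_k$ and $\zeta_{k+q}-\omega$, the two Green's functions are evaluated on opposite sides of this jump, so their difference is of order $e^{-\sigma|t-s|}$, not $\eta e^{-\sigma|t-s|}$.

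The estimate can therefore only be established for $s$ outside $\nu$-neighbourhoods of the $\zeta_k$. The exceptional set must then be absorbed by its small measure inside the integral in the proof of Theorem \ref{mainthm}, which is where your remark about sets of length $O(\eta)$ actually belongs.
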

\begin{lemma} \label{DiagAP}
    Let $\omega$ be a common $\eta$-almost period of matrices $A_0(t)$ and $A_1(t)$. Then the Green's function satisfies the following diagonal almost periodicity estimated for all $t,s \in \mathbb{R}$.
    \begin{align}
        ||G(t+\omega,s+\omega)-G(t,s)|| \leq \tilde{K}\eta e^{-\tilde{\sigma}|t-s|}\label{G-G}
    \end{align}
\end{lemma}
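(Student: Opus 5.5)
We will argue by the standard \emph{perturbation-of-the-Green-function} device, adapted to the piecewise constant argument. The shifted data satisfy the same kind of system. Put $\tilde A_0(t)=A_0(t+\omega)$, $\tilde A_1(t)=A_1(t+\omega)$, and take the shifted switching sequences $\theta_{i+q}-\omega$, $\zeta_{i+q}-\omega$, with $q\in Q$ chosen by Lemma \ref{lemma4}. Then $G(t+\omega,s+\omega)$ is the Green's function $\tilde G(t,s)$ of the shifted homogeneous system. That system again admits an exponential dichotomy with the same constants and the projection $P$ transported by $Z(\omega)$.

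The first step is to express the difference $\tilde G-G$ through a variation-of-constants identity. For fixed $s$, the function $t\mapsto \tilde G(t,s)-G(t,s)$ solves, away from the diagonal and the switching points, the original EPCAG system \eqref{eq:first} with forcing term
\[
[\tilde A_0(t)-A_0(t)]\tilde G(t,s)+[\tilde A_1(t)-A_1(t)]\tilde G(\tilde\gamma(t),s)+A_1(t)\bigl[\tilde G(\tilde\gamma(t),s)-\tilde G(\gamma(t),s)\bigr].
\]
It is also bounded in $t$ by Lemma \ref{lemma3.2}, and it has no jump at $t=s$, since both functions have the same unit jump. By the uniqueness of bounded solutions (Remark \ref{remark_uniqueness}), we therefore get the representation
\[
\tilde G(t,s)-G(t,s)=\int_{-\infty}^{\infty}G(t,u)\,F(u,s)\,du,
\]
where $F$ is the forcing above.

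The second step is to bound the forcing. The first two terms are bounded by $\eta Ke^{-\sigma|u-s|}$ (using Lemma \ref{lemma3.2} for $\tilde G$) up to the factor $e^{\sigma\bar\theta}$, which accounts for replacing $\tilde\gamma(u)$ by $u$. The third term comes from the mismatch of the arguments. By Lemma \ref{lemma4}, $|\tilde\gamma(u)-\gamma(u)|<\nu<\eta$ except on the short intervals of length less than $\nu$ near the $\theta_i$ where the two partitions disagree. On each interval of continuity, $\tilde G(\cdot,s)$ is Lipschitz with a constant of order $\bar M$ times the bound on $A$, so this term is $O(\eta)e^{-\sigma|u-s|}$. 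On the exceptional intervals, the plan is to absorb the contribution using their total measure, which is at most $\nu$ per unit interval of length about $\bar\theta$; this produces the constant $\bar M^2\bar\theta e^{\sigma\bar\theta/2}$ in $\tilde K$. The convolution estimate
\[
\int_{\mathbb R}e^{-\sigma|t-u|}e^{-\sigma|u-s|}\,du\le\Bigl(|t-s|+\tfrac{2}{\sigma}\Bigr)e^{-\sigma|t-s|}\le \tfrac{2}{\sigma}e^{\sigma\bar\theta}\,e^{-\frac{\sigma}{2}|t-s|}
\]
(after absorbing $|t-s|e^{-\frac\sigma2|t-s|}$ into the constants) then yields the terms $2K^2\bar M e^{3\sigma\bar\theta}/\sigma$ and $2K^2e^{2\sigma\bar\theta}/\sigma$ in $\tilde K$, together with the rate $\tilde\sigma=\sigma/2$.

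The third step handles the case where $t$ and $s$ lie in the same continuity interval. There $G=X(t,s)$, and we estimate directly with Gronwall: $\|X(t+\omega,s+\omega)-X(t,s)\|\le \bar M^2\bar\theta\eta$. This accounts for the remaining constant in $\tilde K$.

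The main obstacle is the mismatch of switching moments between $\tilde\gamma$ and $\gamma$. The difference $\tilde G(\tilde\gamma(u),s)-G(\gamma(u),s)$ is not small on the $\nu$-neighbourhoods of the $\theta_i$. I would handle this by treating the contribution of those neighbourhoods as an $L^1$-small perturbation, of size at most $\nu$ per interval, rather than a uniformly small one. If the variation-of-constants identity cannot be justified directly across the switching points, I would fall back on comparing the explicit products \eqref{eqn7} factor by factor. In that approach, $\|M_{k+q}(\theta_{k+q})-M_k(\theta_k)\|=O(\eta)$ follows from (C2) and Gronwall, and the errors are summed by telescoping against the dichotomy bounds \eqref{eq:Zminus} and \eqref{eq:Zplus}.
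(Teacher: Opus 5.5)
There is a genuine gap in your bound for the mismatch term, and it cannot be closed: the stated pointwise inequality is false on small sets of $s$.

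Your reduction is sound as far as it goes, and on one point it is more careful than the paper. Subtracting the equations for $\tilde G(t,s)=G(t+\omega,s+\omega)$ and $G(t,s)$ correctly produces the mismatch term $A_1(u)\bigl[\tilde G(\tilde\gamma(u),s)-\tilde G(\gamma(u),s)\bigr]$. The paper's equation for $\partial W_-/\partial t$ silently drops this term, because it tacitly assumes $\gamma(t+\omega)=\gamma(t)+\omega$.

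The gap is in how you bound this term. For $u$ in the common part of $[\theta_i,\theta_{i+1})$ and $[\theta_{i+q}-\omega,\theta_{i+q+1}-\omega)$, the two arguments are $\zeta_i$ and $\zeta_{i+q}-\omega$, and you use that $\tilde G(\cdot,s)$ is Lipschitz between them. But $\tilde G(\cdot,s)$ has its unit jump at the point $s$. Whenever $s$ lies between $\zeta_i$ and $\zeta_{i+q}-\omega$, the difference straddles that jump and has size about $\|A_1(u)\|$ for \emph{every} $u$ in that common part. That set has length close to $\underline{\theta}$, not measure $O(\nu)$, so your $L^1$ device does not apply. The resulting contribution to $\int G(t,u)F(u,s)\,du$ carries no factor $\eta$.

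This cannot be patched, because for such $s$ the inequality \eqref{G-G} itself fails. As a function of $s$, $G(t,\cdot)$ jumps at every $\zeta_k$. An impulse just before $\zeta_k$ changes $z(\zeta_k)$, and hence the term $A_1 z(\zeta_k)$ on all of $[\theta_k,\theta_{k+1})$; an impulse just after does not. For $k+1\le j$, the two adjacent branches of \eqref{G(t,s)} give
\[ G(t,\zeta_k-)-G(t,\zeta_k+)=Z_-(t,\zeta_k)\bigl[M_k^{-1}(\theta_k)X(\theta_k,\zeta_k)-M_k^{-1}(\theta_{k+1})X(\theta_{k+1},\zeta_k)\bigr]. \]
This is nonzero in general: already for scalar constant coefficients $A_0=a\neq 0$, $A_1=b$, the bracket vanishes only when $b=0$. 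The shifted kernel jumps instead at $\zeta_{k+q}-\omega$, which generally differs from $\zeta_k$. So on the gap between these two points, $\|G(t+\omega,s+\omega)-G(t,s)\|$ stays of the size of this jump however small $\eta$ is.

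What can be proved is the estimate for $s\notin\bigcup_k(\zeta_k-\nu,\zeta_k+\nu)$, or equivalently an integrated bound $\int_{\mathbb{R}}\|G(t+\omega,s+\omega)-G(t,s)\|\,ds\le C\eta$. Your argument gives this once the $L^1$ idea is applied in the variable $s$ as well. It is also all that Theorem \ref{mainthm} needs: on the exceptional set, of measure at most $2\nu$ per interval $[\theta_k,\theta_{k+1})$, one simply uses $\|G\|\le Ke^{-\sigma|t-s|}$.

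Separately, $\omega$ must be taken from the set $\Omega$ of Lemma \ref{lemma4}, as you implicitly do when choosing $q$. Being a common almost period of $A_0,A_1$ alone does not suffice. With constant coefficients every $\omega$ qualifies, yet a shift incompatible with the grid $\{\theta_i\}$ changes $G$ by $O(1)$.
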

\begin{proof}
    Take $q \in \mathbb{Z}$ that satisfies the Lemma \ref{lemma4} with matrix-functions $A_0(t)$ and $A_1(t)$. Then, there exist numbers $\nu_1$ and $\nu_2$ with $|\nu_k| < \eta$, $k=1,2$ such that $\zeta_{k+q}=\zeta_k+\omega+\nu_1$ and $\zeta_{k+q+1}=\zeta_{k+1}+\omega+\nu_2$.\\
For the estimation of the diagonal almost periodicity, the following three cases are considered according to the position of $s$, accounting for the different dynamical behaviors of the Green's function.  \vspace{0,2cm} \\
Assume that $t\in [\theta_j,\theta_{j+1})$ for some index $j$. \vspace{0,2cm}\\
\indent \textit{CASE I:} $s\in[\zeta_i,\zeta_{i+1}]$, $i\in \mathbb{Z}$, $i+1\leq j.$\vspace{0,2cm}\\
Set $ W_-(t,s) = G(t+\omega,s+\omega)-G(t,s)$. Then
\begin{align*}
    W_-(t,s) 
    &= \begin{cases}
        Z_-(t+\omega,\theta_{i+q})X(\theta_{i+q},s+\omega)
        \\- Z_-(t,\theta_{i+1})X(\theta_{i+1},s) 
        &\quad \text{if } s \in [\zeta_i,\zeta_i+\nu_1] \\[6pt]
        Z_-(t+\omega,\theta_{i+q+1})X(\theta_{i+q+1},s+\omega)
        \\- Z_-(t,\theta_{i+1})X(\theta_{i+1},s) 
        &\quad \text{if } s \in [\zeta_i+\nu_1,\zeta_{i+1}+\nu_2]\\[6pt]
        Z_-(t+\omega,\theta_{i+q+2})X(\theta_{i+q+2},s+\omega)
        \\- Z_-(t,\theta_{i+1})X(\theta_{i+1},s) 
        &\quad \text{if } s \in [\zeta_{i+1}+\nu_2,\zeta_{i+1}]
    \end{cases}
\end{align*}
For each case, we have
\begin{align*}
    \frac{\partial W_-(t,s)}{\partial t}&=A_0(t)W_-(t,s)+A_1(t)W_-(\gamma(t),s)\\
    &\quad+[A_0(t+\omega)-A_0(t)]G(t+\omega,s+\omega)\\
    &\quad+[A_1(t+\omega)-A_1(t)]G(\gamma(t+\omega),s+\omega)
\end{align*}
Since $W_-(s,s)=0$, we have 
\begin{align*}
    W_-(t,s)&=Z_-(t,s)\int\limits_s^{\zeta_i}X(s,u)\mathcal{D}(u,s)du+\sum\limits_{k=i}^{k=j-1}Z_-(t,\theta_{k+1})\int\limits_{\zeta_k}^{\zeta_{k+1}}X(\theta_{k+1},u)\mathcal{D}(u,s)du\\ 
    &\quad +\int\limits_{\zeta_j}^tX(t,u)\mathcal{D}(u,s)du\\ 
    &=Z_-(t,s)\int\limits_s^{\zeta_i}X(s,u)\mathcal{D}(u,s)du+\sum\limits_{k=i}^{k=j-1}\int\limits_{\zeta_k}^{\zeta_{k+1}}G(t,u)\mathcal{D}(u,s)du\\ 
    &\quad +\int\limits_{\zeta_j}^tX(t,u)\mathcal{D}(u,s)du
\end{align*}
where $\mathcal{D}(u,s)=[A_0(u+\omega)-A_0(u)]G(u+\omega,s+\omega)+[A_1(u+\omega)-A_1(u)]G(\gamma(u+\omega),s+\omega)$\\  
Observe that
\begin{align*}
    ||\mathcal{D}(u,s)|| &\leq \eta Ke^{-\sigma(u-s)}+\eta Ke^{-\sigma(\gamma(u+\omega)-(s+\omega))}\leq \eta Ke^{-\sigma(u-s)}+\eta Ke^{2\sigma\bar{\theta}}e^{-\sigma(u-s)}\\
    &\leq 2\eta Ke^{2\sigma\bar{\theta}}e^{-\sigma(u-s)}
\end{align*}
Then we have that
\begin{align*}
    ||W_-(t,s)|| &\leq \int\limits_s^{\zeta_i}||Z_-(t,s)||||X(s,u)||||\mathcal{D}(u,s)||du + \sum\limits_{k=i}^{j-1}\int\limits_{\zeta_k}^{\zeta_{k+1}}||G(t,u)||||\mathcal{D}(u,s)||du\\
    &+\int\limits_{\zeta_j}^{t}||X(t,u)||||\mathcal{D}(u,s)||du
    \\
    &\leq \int\limits_s^{\zeta_i} K_1e^{-\sigma_1(t-s)}\bar{M}2\eta Ke^{2\sigma\bar{\theta}}e^{-\sigma(u-s)}du+\sum\limits_{k=i}^{j-1}\int\limits_{\zeta_k}^{\zeta_{k+1}}Ke^{-\sigma(t-u)}2\eta Ke^{2\sigma\bar{\theta}}e^{-\sigma(u-s)}\\
    &+ \int\limits_{\zeta_j}^{t} \bar{M}2\eta Ke^{2\sigma\bar{\theta}}e^{-\sigma(u-s)}du
\end{align*}
Clearly,
\begin{align*}
    \int\limits_s^{\zeta_i} K_1e^{-\sigma_1(t-s)}\bar{M}2\eta Ke^{2\sigma\bar{\theta}}e^{-\sigma(u-s)}du &\leq \int\limits_s^{\zeta_i} Ke^{-\sigma(t-s)}\bar{M}2\eta Ke^{2\sigma\bar{\theta}}e^{-\sigma(u-s)}du \\
    & \leq \int\limits_s^{\zeta_i} K^2\bar{M}e^{3\sigma\bar{\theta}}2\eta e^{-\sigma(t-s)} du
\end{align*}
by the choice of $K$ and $\sigma$, also
\begin{align*}
    \int\limits_{\zeta_k}^{\zeta_{k+1}}Ke^{-\sigma(t-u)}2\eta Ke^{2\sigma\bar{\theta}}e^{-\sigma(u-s)} &\leq \int\limits_{\zeta_k}^{\zeta_{k+1}}K^2e^{2\sigma\bar{\theta}}2\eta e^{-\sigma(t-s)}
\end{align*}
and
\begin{align*}
    \int\limits_{\zeta_j}^{t} \bar{M}2\eta Ke^{2\sigma\bar{\theta}}e^{-\sigma(u-s)}du &\leq \int\limits_{\zeta_j}^{t} K\bar{M}e^{2\sigma\bar{\theta}}2\eta e^{-\sigma(u-s)}e^{-\sigma(t-u)}e^{\sigma(t-u)}du \\
    &\leq \int\limits_{\zeta_j}^{t} K\bar{M}e^{3\sigma\bar{\theta}}2\eta e^{-\sigma(t-s)}du 
\end{align*}
Now choose $\hat{K}=\max\{2K^2\bar{M} e^{3\sigma\bar{\theta}},2K^2e^{2\sigma\bar{\theta}},2K\bar{M}e^{3\sigma\bar{\theta}}\}$, so that\\
\begin{align*}
    ||W_-(t,s)|| &\leq \int\limits_s^t\hat{K}\eta e^{-\sigma(t-s)}=\hat{K}\eta e^{-\sigma(t-s)}(t-s) \leq \frac{\hat{K}}{\sigma}\eta e^{\frac{-\sigma}{2}(t-s)}
\end{align*}
\indent \textit{CASE II:} $s\in[\zeta_i,\zeta_{i+1}]$, $i\in \mathbb{Z}$, $i\geq j.$\vspace{0,2cm}\\
By applying analogous arguments to $W_+(t,s)=G(t+\omega,s+\omega)-G(t,s)$, the following estimate is obtained.
\begin{align*}
    ||G(t+\omega,s+\omega)-G(t,s)|| \leq \frac{\hat{K}}{\sigma}\eta e^{\frac{-\sigma}{2}(s-t)}
\end{align*}
\indent \textit{CASE III:} $s\in \widehat{[\zeta_j,t]}$.\vspace{0,2cm}\\
Set $W(t,s)=G(t+\omega,s+\omega)-G(t,s)=X(t+\omega,s+\omega)-X(t,s)$ and get
\begin{align*}
    \frac{\partial W(t,s)}{\partial t}=A_0(t)W(t,s)+[A_0(t+\omega)-A_0(t)]X(t+\omega,s+\omega)
\end{align*}
since $W(s,s)=0$, we have
\begin{align*}
    W(t,s)=\int\limits_s^tX(t,u)[A_0(u+\omega)-A_0(u)]X(u+\omega,s+\omega)du
\end{align*}
it is easy to see that
\begin{align*}
    ||W(t,s)|| \leq \int\limits_s^t\bar{M}^2\eta du &\leq \bar{M}^2\bar{\theta}\eta \\
    &\leq \bar{M}^2\bar{\theta}\eta e^{\frac{-\sigma}{2}|t-s|}e^{\frac{\sigma}{2}|t-s|}\\
    &\leq \bar{M}^2\bar{\theta}e^{\frac{\sigma}{2}\bar{\theta}}\eta e^{\frac{-\sigma}{2}|t-s|}
\end{align*}\\
Finally, choose $\tilde{K}=\max\{\frac{\hat{K}}{\sigma},\bar{M}^2\bar{\theta}e^{\frac{\sigma}{2}\bar{\theta}}\}$, and $\tilde{\sigma}:= \frac{\sigma}{2}$ to get the desired diagonal almost periodicity estimate. $$||G(t+\omega,s+\omega)-G(t,s)|| \leq \tilde{K}\eta e^{-\tilde{\sigma}|t-s|}, \quad\forall t,s \in \mathbb{R}. $$
\end{proof}
\begin{theorem} \label{mainthm}
Suppose that the conditions (C1)-(C4) hold. If the system \eqref{eq:first} admits exponential dichotomy, then equation \eqref{eq:second} admits a unique, continuous, and bounded almost periodic solution.
\end{theorem}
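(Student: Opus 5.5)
The candidate solution is the function $z(t)$ given by \eqref{eqnG}. The lemmas of Section 3 already show that it is a well-defined, bounded (by $2K\tilde M/\sigma$), continuous solution of \eqref{eq:second} in the sense of Definition \ref{solndef}. Uniqueness among bounded solutions comes from Remark \ref{remark_uniqueness}: the difference of two bounded solutions is a bounded solution of \eqref{eq:first}, and under the dichotomy this must vanish. So the only new content is the Bohr almost periodicity of $z$. I will verify it directly from the definition: for each $\epsilon>0$ I produce a relatively dense set of $\epsilon$-translation numbers. Membership $z\in C_0(\mathbb{R})$ also needs uniform continuity. That follows because $z'$ is bounded, since $z' = A_0z + A_1z(\gamma) + f$ is a combination of bounded terms, so $z$ is Lipschitz.

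Fix $\epsilon>0$ and choose $\eta>0$ small, to be fixed at the end. Apply Lemma \ref{lemma4} to the family $\phi = (A_0, A_1, f)$, with the sequences $\theta,\zeta$ controlled by (C2) and (C3). This gives a relatively dense set $\Omega$ of common $\eta$-almost periods $\omega$, each paired with an integer $q$ such that $|\theta_i^q-\omega|<\nu$ and $|\zeta_i^q-\omega|<\nu$. For such $\omega$, substitute $s\mapsto s+\omega$ in \eqref{eqnG} and split the difference:
\begin{align*}
z(t+\omega)-z(t) &= \int_{-\infty}^{\infty}\bigl[G(t+\omega,s+\omega)-G(t,s)\bigr]f(s+\omega)\,ds\\
&\quad+\int_{-\infty}^{\infty}G(t,s)\bigl[f(s+\omega)-f(s)\bigr]\,ds .
\end{align*}
For the first integral, Lemma \ref{DiagAP} bounds it by $\tilde K\eta\tilde M\int e^{-\tilde\sigma|t-s|}ds = 2\tilde K\tilde M\eta/\tilde\sigma$. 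For the second, Lemma \ref{lemma3.2} bounds it by $2K\eta/\sigma$. Hence $\|z(t+\omega)-z(t)\|\le \eta\,(2\tilde K\tilde M/\tilde\sigma + 2K/\sigma)$ for all $t$. Choosing $\eta$ so that this constant times $\eta$ is below $\epsilon$ makes every $\omega\in\Omega$ an $\epsilon$-translation number, and $\Omega$ is relatively dense.

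The main obstacle is the first integral. The shift $s\mapsto s+\omega$ does not map the partition $\{\zeta_k\}$ exactly onto itself: it matches $\zeta_{k+q}$ only up to $\nu$. So the diagonal estimate of Lemma \ref{DiagAP} must hold uniformly, including on the small mismatch intervals of length at most $\nu$ near the $\zeta_k$ and $\theta_k$ where the two Green's functions use different switching indices. Lemma \ref{DiagAP} is stated for exactly such $\omega$ drawn from Lemma \ref{lemma4}, and I take it as given. To be safe, I would also control the mismatch intervals separately. On these intervals $G$ is bounded by $Ke^{-\sigma|t-s|}$, they have length at most $\nu<\eta$, and there is at most one per interval of length $\underline\theta$, by (C3). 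Their total contribution is therefore $O(\eta)$ with a constant depending only on $K,\sigma,\underline\theta,\tilde M$. That keeps the final bound linear in $\eta$ and completes the proof.
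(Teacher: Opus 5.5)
Your proposal is correct and is essentially the paper's own proof. You use the same Green's-function candidate, take existence, boundedness, continuity and uniqueness from Section 3 and Remark \ref{remark_uniqueness}, and pick common almost periods via Lemma \ref{lemma4}. You then split $z(t+\omega)-z(t)$ the same way, bounding the diagonal term with Lemma \ref{DiagAP} and the forcing term with Lemma \ref{lemma3.2}, and choose $\eta$ proportional to $\epsilon$. Your explicit uniform-continuity check (bounded $z'$ makes $z$ Lipschitz) is a small addition the paper leaves implicit, and the extra mismatch-interval safeguard is unnecessary once Lemma \ref{DiagAP} is granted, as both you and the paper do.
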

\begin{proof}
Consider the formula \eqref{z(t)}
\begin{align*}
    z(t)&=\int\limits_{-\infty}^{\infty}G(t,s)f(s)ds=\int\limits_{-\infty}^tG(t,s)f(s)ds+\int\limits_t^{\infty}G(t,s)f(s)ds
\end{align*}
\indent The existence, boundedness, and continuity of the solution $z(t)$ have already been established in Section 3. Furthermore, its uniqueness was discussed and confirmed in the preceding Remark \ref{remark_uniqueness} based on the structural properties of exponential dichotomy. Therefore, it remains only to verify that this unique bounded solution is indeed almost periodic.\\
\indent Let us check that the solution is almost periodic. For a given $\epsilon > 0$, there exists a sufficiently small $\eta = \eta(\epsilon) > 0$ that satisfies the requirements of Lemma \ref{lemma4} for the functions $A_0(t), A_1(t),$ and $f(t)$.\\
\indent This ensures the existence of a relatively dense set $J(\eta)$ of common $\epsilon$-almost periods $\omega$ for the functions $A_0(t), A_1(t),$ and $f(t)$. Consequently, for any $\omega \in J(\eta)$ and for $t \in [\theta_j, \theta_{j+1})$ for all indices $j$, we have
\begin{align*}
    z(t+\omega)-z(t)=&\int\limits_{-\infty}^{\infty}G(t+\omega,s)f(s)ds-\int\limits_{-\infty}^{\infty}G(t,s)f(s)ds\\
    =&\int\limits_{-\infty}^{t+\omega}G(t+\omega,s)f(s)ds+\int\limits_{t+\omega}^{\infty}G(t+\omega,s)f(s)ds\\
    &-\int\limits_{-\infty}^tG(t,s)f(s)ds-\int\limits_t^{\infty}G(t,s)f(s)ds\\ 
    =&\int\limits_{-\infty}^{t}G(t+\omega,s+\omega)f(s+\omega)ds+\int\limits_{t}^{\infty}G(t+\omega,s+\omega)f(s+\omega)ds\\
    &-\int\limits_{-\infty}^tG(t,s)f(s)ds-\int\limits_t^{\infty}G(t,s)f(s)ds\\ 
    =&\int\limits_{-\infty}^{t}[G(t+\omega,s+\omega)f(s+\omega)-G(t,s)f(s)]ds\\ 
    &+\int\limits_{t}^{\infty}[G(t+\omega,s+\omega)f(s+\omega)-G(t,s)f(s)]ds
\end{align*}
\indent To apply the diagonal almost periodicity estimation established in Lemma \ref{DiagAP}, we need the following transformations:
\begin{align*}
    &\int\limits_{-\infty}^{t}[G(t+\omega,s+\omega)f(s+\omega)-G(t,s)f(s)]ds=\\
    &\int\limits_{-\infty}^{t}\{[G(t+\omega,s+\omega)-G(t,s)]f(s+\omega)+G(t,s)[f(s+\omega)-f(s)]\}ds
\end{align*}
\indent and,
\begin{align*}
   & \int\limits_{t}^{\infty}[G(t+\omega,s+\omega)f(s+\omega)-G(t,s)f(s)]ds=\\
    &\int\limits_{t}^{\infty}\{[G(t+\omega,s+\omega)-G(t,s)]f(s+\omega)+G(t,s)[f(s+\omega)-f(s)]\}ds
\end{align*}
\indent Now, observe that
\begin{align*}
    ||G(t+\omega,s+\omega)f(s+\omega)-G(t,s)f(s)|| &\leq ||G(t+\omega,s+\omega)-G(t,s)||||f(s+\omega)||\\
    &+||G(t,s)||||f(s+\omega)-f(s)||
\end{align*}
\indent By applying the estimations \eqref{||G(t,s)||}, \eqref{G-G}, and recalling that the almost periodicity of $f(t)$ under condition (C1) implies its boundedness on the entire real line (with $\|f(t)\| \leq \tilde{M}$), we obtain,
\begin{align*}
    ||G(t+\omega,s+\omega)f(s+\omega)-G(t,s)f(s)|| &\leq \tilde{K}\tilde{M}\eta e^{-\tilde{\sigma}|t-s|} + K\eta e^{-\sigma|t-s|}
\end{align*}
\indent For convenience, choose $\bar{K} = \max \{ \tilde{K}\tilde{M}, K \}$ and $\bar{\sigma} = \min \{ \sigma, \tilde{\sigma} \}$. Then, we obtain
\begin{align*}
    ||G(t+\omega,s+\omega)f(s+\omega)-G(t,s)f(s)|| \leq 2\bar{K}\eta e^{-\bar{\sigma}|t-s|} 
\end{align*}
\indent Consequently, combining this estimation leads us
\begin{align*}
    ||z(t+\omega)-z(t)|| &\leq  \int\limits_{-\infty}^{t}||[G(t+\omega,s+\omega)f(s+\omega)-G(t,s)f(s)]||ds\\ 
    &+\int\limits_{t}^{\infty}||[G(t+\omega,s+\omega)f(s+\omega)-G(t,s)f(s)]||ds \\
    &\leq \int\limits_{-\infty}^{t}2\bar{K}\eta e^{-\bar{\sigma}(t-s)}ds+\int\limits_{t}^{\infty}2\bar{K}\eta e^{-\bar{\sigma}(s-t)}ds\\
    &\leq \frac{2\bar{K}\eta}{\bar{\sigma}} + \frac{2\bar{K}\eta}{\bar{\sigma}} = \frac{4\bar{K}}{\bar{\sigma}} \eta
\end{align*}
\indent To satisfy the definition of almost periodicity for the fixed $\epsilon > 0$, we choose the parameter $\eta = \eta(\epsilon)$ such that
\begin{equation*}
\eta < \frac{\bar{\sigma}}{4\bar{K}}\epsilon.
\end{equation*}
\indent Substituting this choice into our estimation, we obtain:
\begin{equation*}
||z(t+\omega)-z(t)|| < \frac{4\bar{K}}{\bar{\sigma}}\eta < \frac{4\bar{K}}{\bar{\sigma}}.\frac{\bar{\sigma}}{4\bar{K}}\epsilon = \epsilon.
\end{equation*}
\indent This inequality holds for any $\omega \in J(\eta) = J(\eta(\epsilon)) = \Omega(\epsilon)$, confirming that $z(t)$ possesses a relatively dense set of $\epsilon$-almost periods for every $\epsilon > 0$. Therefore, we conclude the almost periodicity of the solution $z(t)$.
\end{proof}
It is worth mentioning that the generalized piecewise constant argument $\gamma(t)$ encompasses both the retarded argument $\beta(t)$ and the advanced argument $\alpha(t)$ as important special cases. Specifically, by choosing the sequence of points $\zeta_i = \theta_i$, the argument $\gamma(t)$ reduces to the retarded case $\beta(t) = \theta_i$ for $t \in [\theta_i, \theta_{i+1})$ for all integer indices $i$. Also, setting $\zeta_i = \theta_{i+1}$ yields the advanced argument $\alpha(t) = \theta_{i+1}$.\\
\indent In a similar vein to the general case, the existence of a unique solution for the retarded and advanced systems depends on the construction of the fundamental matrix solution. For any fixed $(t_0, z_0) \in \mathbb{R} \times \mathbb{R}^n$, the solution is uniquely determined by a single initial moment, similar to ordinary differential equations as previously discussed. The fundamental matrix solution is determined by the transition relations at the switching moments, which are characterized by the matrices $M_i(t)$, for all indices $i$ under the condition (C4). For these specific configurations, we explain the fundamental matrix solutions explicitly as follows.\\
\indent To explore these dynamics under the same assumptions for the matrix-functions $A_0(t)$ and $A_1(t)$ as provided in Section 1, we first define the corresponding retarded system:
\begin{align}
z'(t) &= A_0(t)z(t) + A_1(t)z(\beta(t)), \label{eq:retarded} 
\end{align}
\indent Assume that $(t_0,z_0)$ is fixed, and there exists an index $i$ such that $\theta_i \leq t_0 < \theta_{i+1}$.  The solution of the system \eqref{eq:retarded} satisfies, on the interval $[\theta_i,\theta_{i+1})$ $i\in\mathbb{Z}$, the functional differential equation
\begin{equation*}
    z'(t)=A_0(t)z(t)+A_1(t)z(\theta_i).
\end{equation*}
\indent Assume that $t_0 \neq \theta_i$. Then, formally, we need the pair of initial points $(t_0,z_0)$ and $(\theta_i,z(\theta_i))$ to proceed with the solution. Indeed, since $z_0=M_i(t_0)z(\theta_i)$ and the matrix $M_i(t_0)$ is nonsingular, we can say that the initial condition $z(t_0) = z_0$ is sufficient to define the solution.\\
\indent Hence, for a fixed $t_0\in \mathbb{R}$ there exists a fundamental matrix of solutions of \eqref{eq:retarded},\\ $Z(t)=Z(t,t_0)$, $Z(t_0,t_0)=I$ such that 
\begin{equation*}
    \frac{dZ}{dt}=A_0(t)Z(t)+A_1(t)Z(\beta(t))
\end{equation*}
Let us construct $Z(t)$. Define the matrix for increasing t.\vspace{0,1cm}\\
\indent We have $Z(\theta_i)=M_i^{-1}(t_0)I=M_i^{-1}(t_0).$ Hence, on the interval $[\theta_i,\theta_{i+1}]$,$$Z(t,t_0)=M_i(t)M_i^{-1}(t_0).$$Then $Z(\theta_{i+1})=M_i(\theta_{i+1})M_i^{-1}(t_0)$ which implies that$$Z(t,t_0)=M_{i+1}(t)Z(\theta_{i+1})=M_{i+1}(t)M_i(\theta_{i+1})M_i^{-1}(t_0)$$if $t \in [\theta_{i+1},\theta_{i+2}]$. Then one can continue by induction to obtain
\begin{equation*}
    Z(t)=M_l(t)\left[ \prod\limits_{k=l-1}^{i}M_k(\theta_{k+1}) \right]M_i^{-1}(t_0), \label{eqn7}
\end{equation*}
if $t\in[\theta_l,\theta_{l+1}]$, for arbitrary $l>i$.\vspace{0,2cm} \\
\indent Similarly, for decreasing $t$, we have the following discussion. If $t \in [\theta_{i-1},\theta_i]$, then$$ Z(t,t_0)=M_{i-1}(t)Z(\theta_{i-1}). $$We can find $Z(\theta_{i-1})$ as follows:$$ Z(\theta_i)= M_i(\theta_i)M_i^{-1}(t_0) \quad \text{and also} \quad Z(\theta_i)=M_{i-1}(\theta_i)Z(\theta_{i-1})$$ by continuity. Thus,$$ Z(\theta_{i-1})=M_{i-1}^{-1}(\theta_i)Z(\theta_i)=M_{i-1}^{-1}(\theta_i)M_i(\theta_i)M_i^{-1}(t_0). $$Hence, on the interval $[\theta_{i-1},\theta_i]$,$$ Z(t,t_0)=M_{i-1}(t)M_{i-1}^{-1}(\theta_i)M_i(\theta_i)M_i^{-1}(t_0). $$Then, by induction, we have
\begin{equation*}
    Z(t)=M_j(t)\left[ \prod\limits_{k=j}^{i-1}M_k^{-1}(\theta_{k+1})M_{k+1}(\theta_{k+1}) \right]M_i^{-1}(t_0).
\end{equation*}
if $t\in[\theta_j,\theta_{j+1}]$, for arbitrary $j<i$.\vspace{0,2cm} \\
\indent One can easily see that 
\begin{equation*}
Z(t,s)=Z(t)Z^{-1}(s), \quad t,s \in \mathbb{R}
\end{equation*}
   and a solution $z(t),z(t_0)=z_0, (t_0.z_0) \in \mathbb{R}\times\mathbb{R}^n$ of \eqref{eq:retarded} is equal to 
   \begin{equation*}
       z(t)=Z(t,t_0)z_0 \quad t\in \mathbb{R.} \label{eqn10}
   \end{equation*}
\indent Also, define the corresponding advanced system: \\
\begin{align}
    z'(t) &= A_0(t)z(t) + A_1(t)z(\alpha(t)). \label{eq:advanced}
\end{align}
and the solution of the system \eqref{eq:advanced} satisfies, on the given interval $[\theta_i,\theta_{i+1})$, the functional differential equation

\begin{equation*}
    z'(t)=A_0(t)z(t)+A_1(t)z(\theta_{i+1}).
\end{equation*}
and again by assuming that $t_0 \neq \theta_{i+1}$, we need the pair of initial points $(t_0,z_0)$ and $(\theta_i,z(\theta_{i+1}))$ to proceed with the solution. Indeed, since $z_0=M_i(t_0)z(\theta_{i+1})$ and the matrix $M_i(t_0)$ is nonsingular, we can say that the initial condition $z(t_0) = z_0$ is sufficient to define the solution.\\
\indent Hence, for a fixed $t_0\in \mathbb{R}$ there exists a fundamental matrix of solutions of \eqref{eq:advanced},\\ $Z(t)=Z(t,t_0)$, $Z(t_0,t_0)=I$ such that 
\begin{equation*}
    \frac{dZ}{dt}=A_0(t)Z(t)+A_1(t)Z(\alpha(t)).
\end{equation*}
Let us construct $Z(t)$ for increasing t first.\vspace{0,1cm}\\
\indent We have $Z(\theta_{i+1})=M_i^{-1}(t_0)I=M_i^{-1}(t_0)$. Thus on the interval $[\theta_i,\theta_{i+1}]$, $$Z(t,t_0)=M_i(t)M_i^{-1}(t_0).$$ If $t\in [\theta_{i+1},\theta_{i+2}]$,$$ Z(t,t_0)=M_{i+1}(t)Z(\theta_{i+2}). $$To find $Z(\theta_{i+2})$, use $$Z(\theta_{i+1})=M_i(\theta_{i+1})M_i^{-1}(t_0) \quad \text{and} \quad Z(\theta_{i+1})=M_{i+1}(\theta_{i+1})Z(\theta_{i+2})$$ because of the continuity. Observe that$$ Z(\theta_{i+2})=M_{i+1}^{-1}(\theta_{i+1})Z(\theta_{i+1})=M_{i+1}^{-1}(\theta_{i+1})M_i(\theta_{i+1})M_i^{-1}(t_0). $$Hence,$$ Z(t,t_0)=M_{i+1}(t)M_{i+1}^{-1}(\theta_{i+1})M_i(\theta_{i+1})M_i^{-1}(t_0), $$ Then, by induction, one can find
\begin{equation*}
    Z(t)=M_l(t)\left[ \prod\limits_{k=l}^{i+1}M_k^{-1}(\theta_{k})M_{k-1}(\theta_{k}) \right]M_i^{-1}(t_0), \label{eqn7}
\end{equation*}
if $t\in[\theta_l,\theta_{l+1})$, for arbitrary $l>i$.\vspace{0,2cm} \\
\indent For decreasing $t$, we have the following matrix on the interval $[\theta_{i-1},\theta_i]$, $$ Z(t,t_0)=M_{i-1}(t)Z(\theta_i) $$ and $Z(\theta_i)=M_i(\theta_i)M_i^{-1}(t_0)$. Thus$$ Z(t,t_0)=M_{i-1}(t)M_i(\theta_i)M_i^{-1}(t_0).$$ Then, by induction, we get
\begin{equation*}
    Z(t)=M_j(t)\left[ \prod\limits_{k=j+1}^{i}M_{k}(\theta_{k}) \right]M_i^{-1}(t_0).
\end{equation*}
if $t\in[\theta_j,\theta_{j+1}]$, for arbitrary $j<i$.\vspace{0,2cm} \\
\indent One can easily see that 
\begin{equation*}
Z(t,s)=Z(t)Z^{-1}(s), \quad t,s \in \mathbb{R}
\end{equation*}
   and a solution $z(t),z(t_0)=z_0, (t_0,z_0) \in \mathbb{R}\times\mathbb{R}^n$ of \eqref{eq:advanced} is equal to 
   \begin{equation*}
       z(t)=Z(t,t_0)z_0 \quad t\in \mathbb{R.} \label{eqn10}
   \end{equation*}
\indent Therefore, the results established in Theorem \ref{mainthm} naturally extend to these specific equations. Based on this observation, we state the following corollaries.
\begin{corollary}
    Consider the following system:
\begin{align}
        z'(t) &= A_0(t)z(t) + A_1(t)z(\beta(t)) + f(t) \label{betanon}
\end{align}
with the associated system \eqref{eq:retarded} which is exponential dichotomous. Also suppose that the conditions (C1)-(C4) hold. Then the equation \eqref{betanon} admits a unique, continuous, and bounded almost periodic solution.
\end{corollary}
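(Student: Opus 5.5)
The plan is to obtain the corollary as the special case $\zeta_i=\theta_i$ of Theorem \ref{mainthm}, so that $\gamma(t)=\beta(t)$. With this choice \eqref{betanon} is literally \eqref{eq:second} and \eqref{eq:retarded} is literally \eqref{eq:first}. The first step is to check that all hypotheses of Theorem \ref{mainthm} hold in this configuration.

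Condition (ii) of Definition \ref{def:epcag} holds trivially, since $\theta_i\le\theta_i<\theta_{i+1}$. Under (C2), $\zeta_i^j=\theta_i^j$, so the equipotential almost periodicity of $\{\zeta_i^j\}$ is the same statement as that of $\{\theta_i^j\}$. Moreover $\inf_{\mathbb Z}\zeta_i^1=\inf(\theta_{i+1}-\theta_i)\ge\underline{\theta}>0$ by (C3), which is the extra hypothesis needed in Lemma \ref{lemma4}. Conditions (C1), (C3) and (C4) are assumed directly, with $M_i(t)=X(t,\theta_i)+\int_{\theta_i}^tX(t,s)A_1(s)ds$. The fundamental matrix $Z(t)$ is the one constructed just above for \eqref{eq:retarded}, and by assumption it admits the exponential dichotomy estimates \eqref{eq:Zminus}--\eqref{eq:Zplus}.

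Next I will specialise the Green's function \eqref{G(t,s)} by replacing each $\zeta_k$ with $\theta_k$. The pieces then become $Z_\mp(t,\theta_{k+1})X(\theta_{k+1},s)$ for $s\in[\theta_k,\theta_{k+1}]$, together with $X(t,s)$ for $s\in[\theta_j,t]$. The remaining steps simply retrace the earlier arguments:
\begin{enumerate}
\item Lemma \ref{lemma3.2} gives the estimate $\|G(t,s)\|\le Ke^{-\sigma|t-s|}$. Its proof only used that $\theta_{k+1}$ and $s$ lie within distance $\bar\theta$ of each other, which remains true here.
\item Lemma \ref{lemma3} and the continuity lemma show that $z$ defined by \eqref{eqnG} is bounded and continuous.
\item The solution lemma shows that $z$ solves \eqref{betanon}; there one uses $\gamma=\beta$, which is right-continuous at the $\theta_i$.
\item Remark \ref{remark_uniqueness} gives uniqueness among bounded solutions.
\item Lemma \ref{DiagAP} together with the proof of Theorem \ref{mainthm} gives almost periodicity.
\end{enumerate}

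The point needing the most care is the diagonal estimate of Lemma \ref{DiagAP}. There the shifted switching points $\zeta_{i+q}=\theta_{i+q}$ must be compared with $\theta_i+\omega$ through Lemma \ref{lemma4}, and one needs the bound on $\gamma(u+\omega)-(s+\omega)$ in terms of $u-s$ up to $2\bar\theta$. In the retarded case $\gamma(u)\in(u-\bar\theta,u]$, so this bound holds, and in fact it is simpler than in the general case. The same three cases ($s$ before, after, or in the current interval) apply verbatim.

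Since every constant ($K,\sigma,\tilde K,\tilde\sigma$) depends only on $K_{1,2},\sigma_{1,2},\bar M,\bar\theta$, the conclusion of Theorem \ref{mainthm} carries over. Therefore \eqref{betanon} has a unique continuous bounded almost periodic solution, given by \eqref{eqnG}.
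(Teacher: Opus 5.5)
Your proposal is correct and follows the same route as the paper. The paper's proof is the one-line observation that setting $\zeta_i=\theta_i$ gives $\gamma(t)=\beta(t)$, so the corollary is a direct instance of Theorem \ref{mainthm}. Your explicit checks make that specialization more transparent: that $\zeta_i=\theta_i$ is admissible in Definition \ref{def:epcag}, and that $\inf_{\mathbb{Z}}\zeta_i^1\ge\underline{\theta}>0$ by (C3), which Lemma \ref{lemma4} requires. Since the theorem applies verbatim, re-tracing the individual lemmas is not actually needed.
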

\begin{proof}
    The result is a direct consequence of Theorem \ref{mainthm} by setting $\zeta_i = \theta_{i}$, which yields $\gamma(t)=\beta(t)$
\end{proof}
\begin{corollary}
    Consider the following system:
\begin{align}
    z'(t) &= A_0(t)z(t) + A_1(t)z(\alpha(t)) + f(t) \label{alphanon}
\end{align}
with the associated system \eqref{eq:advanced} which is exponential dichotomous. Also suppose that the conditions (C1)-(C4) hold. Then the equation \eqref{alphanon} admits a unique, continuous, and bounded almost periodic solution.
\end{corollary}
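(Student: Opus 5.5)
The plan is to reduce the statement to Theorem \ref{mainthm} by specializing the sequence $\zeta$, in the same way as the preceding corollary for the retarded argument. We set $\zeta_i=\theta_{i+1}$ for all $i\in\mathbb{Z}$. Then $\gamma(t)=\zeta_i=\theta_{i+1}=\alpha(t)$ for $t\in[\theta_i,\theta_{i+1})$. Systems \eqref{alphanon} and \eqref{eq:advanced} therefore coincide with \eqref{eq:second} and \eqref{eq:first}, and the matrices $M_i(t)$ become $X(t,\theta_{i+1})+\int_{\theta_{i+1}}^t X(t,s)A_1(s)\,ds$.

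The first check is admissibility in Definition \ref{def:epcag}. Condition (ii) requires $\theta_i\le\zeta_i<\theta_{i+1}$, and the choice $\zeta_i=\theta_{i+1}$ sits exactly on the excluded right endpoint. I would handle this by noting that nothing in the construction uses the strict inequality. The solution notion in Definition \ref{def:solution}, the fundamental matrix (already computed explicitly above for the advanced case), and the Green's function \eqref{G(t,s)} only use that $\zeta_i$ lies in the closed interval $[\theta_i,\theta_{i+1}]$. The same holds for the bounds $\|X(\theta_{k+1},s)\|\le\bar M$ when $s\in[\zeta_k,\zeta_{k+1}]=[\theta_{k+1},\theta_{k+2}]$, which still has length at most $\bar\theta$ by (C3). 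With this remark, Lemma \ref{lemma3.2} and the existence, boundedness and continuity lemmas of Section 3 go through unchanged.

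Next I would verify the hypotheses of Theorem \ref{mainthm}. Conditions (C1), (C3) and (C4) are assumed directly, and exponential dichotomy of \eqref{eq:advanced} is assumed. For (C2) note that $\zeta_i^j=\zeta_{i+j}-\zeta_i=\theta_{i+1+j}-\theta_{i+1}=\theta_{i+1}^j$. So the family $\{\zeta_i^j\}$ is an index shift of $\{\theta_i^j\}$, and any common $\epsilon$-translation number for the latter serves for the former. For Lemma \ref{lemma4} we also need $\inf\zeta_i^1>0$, and this follows from $\zeta_i^1=\theta_{i+2}-\theta_{i+1}\ge\underline\theta$ by (C3).

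Theorem \ref{mainthm} then yields a unique, continuous, bounded almost periodic solution of \eqref{alphanon}. The main obstacle is the endpoint issue above, that is, justifying that Lemma \ref{DiagAP} and the Green's function estimates survive when $\zeta_i=\theta_{i+1}$. I would resolve it by rereading those proofs to confirm that they only use $|s-\theta_{k+1}|\le\bar\theta$ for $s\in[\zeta_k,\zeta_{k+1}]$ and $|\gamma(u)-u|\le\bar\theta$. Both hold in the advanced case, the latter because $\alpha(u)-u\le\theta_{i+1}-\theta_i\le\bar\theta$.
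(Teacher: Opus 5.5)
Your proposal is correct and takes the same route as the paper, whose proof is the one-line specialization $\zeta_i=\theta_{i+1}$ (so that $\gamma=\alpha$) followed by an appeal to Theorem \ref{mainthm}. Your additional checks fill in details the paper leaves implicit: that the strict inequality $\zeta_i<\theta_{i+1}$ in Definition \ref{def:epcag} is never actually used, that $\zeta_i^j=\theta_{i+1}^j$, and that $\inf_i\zeta_i^1\ge\underline{\theta}$ by (C3); the (C2) check is harmless but redundant, since (C2) is already among the hypotheses.
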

\begin{proof}
    The result is a direct consequence of Theorem \ref{mainthm} by setting $\zeta_i = \theta_{i+1}$, which yields $\gamma(t)=\alpha(t)$
\end{proof}
The theoretical developments discussed so far provide the necessary framework to address the conceptual limitations of the model presented in \cite{24}. The approach in \cite{24} relies on a scalar equation to claim exponential dichotomic behavior. However, in a one-dimensional framework, an exponential dichotomy trivially reduces to basic exponential decay governed by the sign of the coefficient. Such an oversimplification overlooks the genuine essence of the phenomenon, which strictly requires the simultaneous existence of stable and unstable directions.\\
\indent Besides, in the literature, multi-variable systems with generalized piecewise constant arguments have already been explored within this context. Most notably, the foundational work in \cite{1}  provided a two-dimensional example specifically to illustrate uniform exponential stability, successfully establishing exact parameter bounds for this purely stable dynamic.\\
\indent While that study provided a rigorous example for the stable aspect of the dichotomy, our objective is to move beyond the trivial scalar case in \cite{24} and explicitly demonstrate a genuine saddle-type exponential dichotomy. In the following example, this behavior is quantitatively verified through the exact calculation of multipliers. By incorporating an almost periodic forcing term and a structured switching sequence $\theta_i$, we illustrate the simultaneous presence of stable and unstable subspaces, providing a comprehensive visualization of the exponential dichotomic separation in the solution space. \vspace{0.2cm}

\noindent \textbf{Example} Define the following equation:
\begin{equation}\label{eqnNonHom}
    z'(t)= \begin{pmatrix}
        0 & 1  \\ 0 & 0
    \end{pmatrix}z(t) + \begin{pmatrix}
        0 & 0 \\ q & 0
    \end{pmatrix}z(\gamma(t)) + \begin{pmatrix}
        \cos(2\pi t) \\ \sin(2t)
    \end{pmatrix} 
\end{equation} 
where \( q \in \mathbb{R} \), and the switching moments \( \theta_i \) are defined by the alternating sequence
\begin{equation*}
   \theta_i = 
   \begin{cases}
    i, & \text{if } i = 2k, \\
    i - \frac{1}{2}, & \text{if } i = 2k + 1,
\end{cases} \quad k \in \mathbb{Z},
\end{equation*}
Furthermore, the deviating moments are chosen as \( \zeta_i = \theta_{i+1} \) for all \( i \in \mathbb{Z} \).\\
\indent To analyze the discrete nature of the piecewise constant arguments, we incorporate the $\omega-p$ property, which provides the formal basis for establishing periodicity within the hybrid framework. This property requires the sequences of switching moments $\{\theta_i\}$ and piecewise constant arguments $\{\zeta_i\}$ to satisfy the relations $\theta_{i+p} = \theta_i + \omega$ and $\zeta_{i+p} = \zeta_i + \omega$ for some $\omega \in \mathbb{R}$ and a positive integer $p$. Specifically, the system under consideration is characterized by an alternating sequence, and this arrangement naturally satisfies the $\omega-p$ property with $\omega = 2$ and $p = 2$, thereby facilitating the calculation of the monodromy matrix and the subsequent determination of the multipliers.\\
 \indent Denote by $Q$ the product $\prod\limits_{k=1}^pG_k$, where matrices $G_k$ are equal to $M_k^{-1}(\theta_k)M_{k-1}(\theta_k)$, $k\in \mathbb{Z}$. We call the matrix $Q$, the monodromy matrix, and eigenvalues of the matrix, $\rho_j$, $j=1,2,...,n$, the multipliers.\\
 \indent The stability characteristics and the existence of an exponential dichotomy for system \eqref{eqnNonHom} are governed by the properties of its associated homogeneous system:
 \begin{equation}\label{eqnHom}
    z'(t)= \begin{pmatrix}
        0 & 1  \\ 0 & 0
    \end{pmatrix}z(t) + \begin{pmatrix}
        0 & 0 \\ q & 0
    \end{pmatrix}z(\gamma(t))
\end{equation} 
 \indent Specifically, the monodromy matrix for the equation \eqref{eqnHom} is:
\begin{align*}
    M_2^{-1}(\theta_2)M_1(\theta_{2})M_1^{-1}(\theta_1)M_0(\theta_{1})=\begin{pmatrix}
    \frac{16(3q+4)}{(q-8)(9q-8)} & \frac{4(9q+32)}{(q-8)(9q-8)} \\ \frac{4q(3q+32)}{(q-8)(9q-8)} & \frac{48q+(q+8)(9q+8)}{(q-8)(9q-8)}
\end{pmatrix}
\end{align*}
\indent yielding the eigenvalues:
\begin{align*}
\rho_{1,2} =
\frac{9q^2 + 176q + 128}{2(q - 8)(9q - 8)}
\pm
\frac{\sqrt{q\left(81q^3 + 3168q^2 + 30976q + 65536\right)}}{2(9q^2 - 80q + 64)}
\end{align*}
\indent The existence of the multipliers implies that there are two linearly independent solutions, where one exhibits exponential decay and the other exhibits exponential growth. This dynamic signifies that the homogeneous system possesses an exponential dichotomy, a phenomenon that occurs when the solution space admits a decomposition into stable and unstable invariant subspaces. In terms of the system's spectral properties, this corresponds to the case where the multipliers are split by the unit circle:$$|\rho_1| < 1 < |\rho_2| \quad \text{or} \quad |\rho_2| < 1 < |\rho_1|.$$\\
\indent In our study, this dichotomy is the fundamental requirement for the existence and uniqueness of the almost periodic solution. \\
\indent Since it is algebraically difficult to explicitly characterize the set of $q$-values that yield exponential dichotomy, we numerically tested several values and observed the corresponding eigenvalues. Below are some representative $q$ values for which exponential dichotomy is verified:

\[
\begin{array}{c|c|c}
q & |\rho_1| \text{ (Stable)} & |\rho_2| \text{ (Unstable)} \\ \hline
0.10 & 0.5600 & 2.0374  \\
0.20 & 0.4516 & 2.9305  \\
0.30 & 0.3859 & 4.0643  \\
0.40 & 0.3396 & 5.6365  \\
0.50 & 0.3044 & 8.0100 
\end{array}
\]
\indent To illustrate the solution trajectories' behavior quantitatively, we focus our analysis on the case where $q = 0.30$.\\
\indent In this model, one can see that the conditions (C1)-(C4) hold and the associated homogeneous system is exponential dichotomous.\\ 
\indent Thus the solution space of the equation \eqref{eqnHom} decomposes into two subspaces: a stable and an unstable subspace. The long-term behavior of any given solution is strictly determined by the projection of its initial condition onto these subspaces.\\
\indent \indent To demonstrate the exponential dichotomy of the nonhomogeneous system \eqref{eqnNonHom}, we compare two distinct trajectories. The first trajectory is initialized with precisely computed conditions designed to eliminate the unstable components, thereby isolating the bounded, stable behavior driven by the almost periodic forcing term. In contrast, the second trajectory is introduced with a small initial perturbation. Due to the presence of a non-trivial unstable manifold, this slight deviation is rapidly amplified, leading to exponential divergence. This stark separation between the bounded stable trajectory and the divergent trajectory visually confirms the exponential dichotomic behavior of the EPCAG system. These dynamics are clearly observed in Figure \ref{fig:double-trajectory}.
\begin{figure}[H]
    \captionsetup{justification=raggedright, singlelinecheck=false}
    \centering
    \begin{subfigure}[b]{0.48\linewidth}
        \centering
        \includegraphics[width=\linewidth]{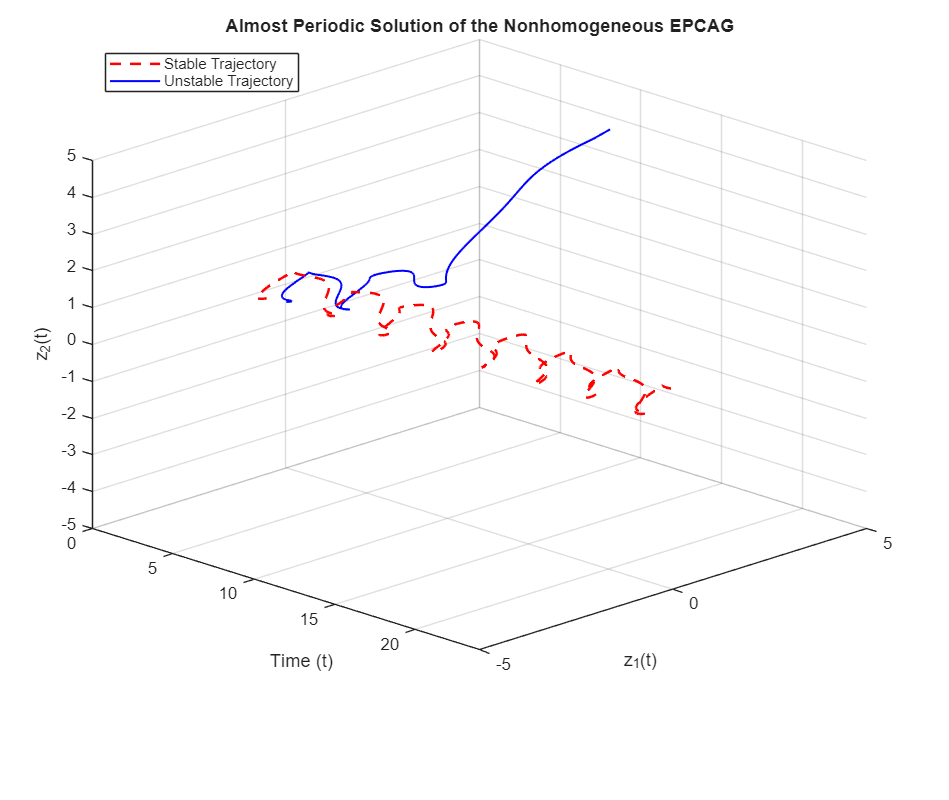}
        \caption*{(a)}
    \end{subfigure}
    \hfill
    \begin{subfigure}[b]{0.48\linewidth}
        \centering
        \includegraphics[width=\linewidth]{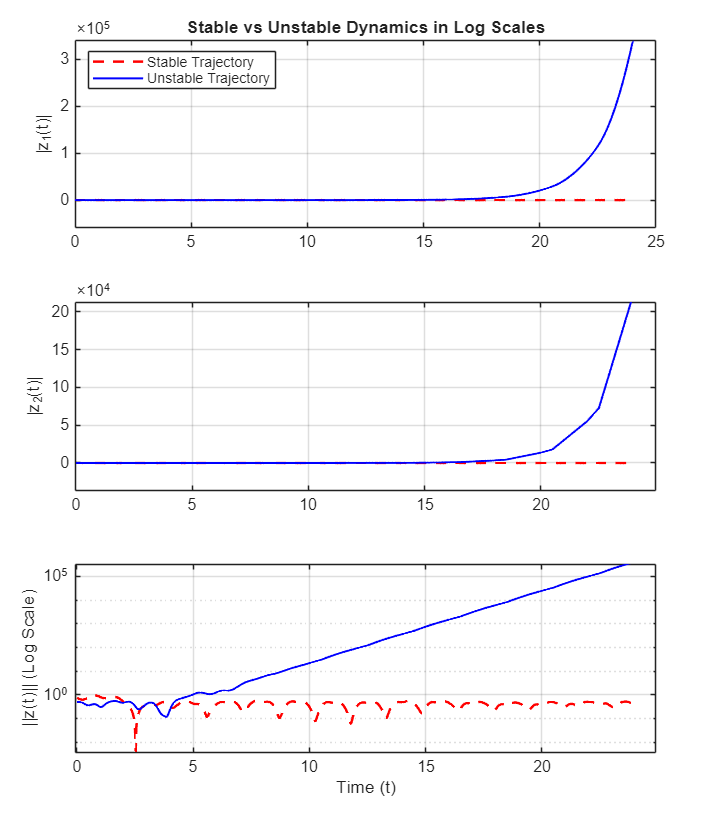}
        \caption*{(b)}
    \end{subfigure}
    \caption{
        Numerical results for the hybrid system \eqref{eqnNonHom} with parameter $q = 0.3$.\\ 
   (a) 3D phase trajectories showing the interaction between the exponential dichotomy and the almost periodic forcing term. The blue curve represents the stable trajectory. Its initial condition is specifically chosen to remove unstable components, ensuring the stable behavior. In contrast, the red dashed curve shows an unstable trajectory starting from a small perturbation, which highlights the rapid exponential divergence.\\
(b) Log-scale plots of $|z_1(t)|$, $|z_2(t)|$, and $\|z(t)\|$ demonstrate the system's exponential dichotomy. The red dashed curve shows the exponential growth of the unstable mode, whereas the blue curve maintains stable behavior throughout the simulation interval. This stability is achieved by computing the initial conditions exactly, which delays the inevitable drift caused by numerical errors.
}\label{fig:double-trajectory}
\end{figure}
These results verify the decomposition of the solution space into stable and unstable parts exactly as the theory predicts, confirming the existence of the unique almost periodic solution within the piecewise constant dynamics of the system.

\section{Conclusions}
\indent \indent Almost periodicity stands as one of the most sophisticated concepts in recurrent dynamics, attracting significant research interest not only for its theoretical depth but also for its versatile functional properties. However, proving the existence and stability of such outputs in hybrid systems remains a formidable challenge. A primary difficulty lies in ensuring that almost periodic properties remain consistent as they transition between continuous and discrete time. Furthermore, describing exponential dichotomies within these systems requires a delicate balance; in our research, we have addressed this by introducing constructive conditions that are directly applicable to practical scenarios.\\
\indent Unlike the established practice of reducing hybrid systems to isolated discrete equations \cite{16,17,18,19,20}, our work provides a more holistic framework. We treat continuity and discontinuity as an inseparable whole \cite{1,2,7}, allowing dichotomic models to be analyzed without fragmenting the system’s natural evolution. This integrated approach is a cornerstone of our investigation, as it preserves the structural integrity and universality of the findings. By maintaining this continuous-time perspective, we bridge the gap between nodal points and inter-interval dynamics—a methodological necessity that has been overlooked in recent literature.\\
\indent Our methodology specifically leverages the exponential dichotomy of the system \eqref{eq:first} to derive precise estimates for the Green’s function. This enables us to capture complex dynamics where stable and unstable components coexist. While the foundations of exponential dichotomy for EPCAG systems were initiated in \cite{7}, we have refined and expanded these concepts to rectify the fundamental oversights found in recent attempts to extend the theory. Specifically, we have demonstrated that a rigorous treatment of dichotomous separation must acknowledge the divergent dynamics of the unstable manifold, thereby resolving structural contradictions such as untenable claims of global stability in the presence of an unstable subspace that persist in other methodologies.\\
\indent In conclusion, by evaluating our integrated approach against traditional techniques, we have clearly highlighted its analytical advantages and robustness. We believe that the framework presented here provides a mathematically sound basis for tackling the interplay between recurrence and complexity. These findings not only settle current theoretical disputes but also offer beneficial tools for a wide range of future applications in hybrid dynamical systems.

\end{document}